\journal{SIAM J. of Matrix Analysis and Applications}
\newtheorem{rem}{Remark}
\newtheorem{thm}{Theorem}
\newtheorem{lem}[thm]{Lemma}
\newtheorem{cor}{Corollary}
\newtheorem{Ex}{Example}
\newcommand{\eq} [1] {\begin{equation}\label{#1}}
\newcommand{\en} {\end{equation}}
\newcommand{\KK} {\mathcal{K}}
\newcommand{\Cone}{\operatorname{Cone}}
\newcommand{\trace}{\operatorname{trace}}
\newcommand{\Z} {\mathbb{Z}}
\newcommand{\R} {\mathbb{R}}
\newcommand{\Rn} {\mathbb{R} ^ {n \times n}}
\newcommand{\pRt} { \mathbb{R}^{2 \times 2} _ V}
\newcommand{\pRn} { \mathbb{R}^{n \times n} _ V}
\renewcommand{\Im}{\operatorname{Im}}
\renewcommand{\Re}{\operatorname{Re}}
\newcommand{\diag}{\operatorname{diag}}
\newcommand{\abs}[1]{\left\vert#1\right\vert}
\begin{document}

\begin{frontmatter}

%% Title, authors and addresses

%% use the tnoteref command within \title for footnotes;
%% use the tnotetext command for theassociated footnote;
%% use the fnref command within \author or \address for footnotes;
%% use the fntext command for theassociated footnote;
%% use the corref command within \author for corresponding author footnotes;
%% use the cortext command for theassociated footnote;
%% use the ead command for the email address,
%% and the form \ead[url] for the home page:
%% \title{Title\tnoteref{label1}}
%% \tnotetext[label1]{}

\title{On common invariant cones for families of matrices\tnoteref{support}}
\tnotetext[support]{Research of all authors was supported in part by NSF grant
DMS-0456625. Research of LR was supported in part also by a Summer Research Grant from the College of
William and Mary.}
%% use optional labels to link authors explicitly to addresses:
%% \author[label1,label2]{}
%% \address[label1]{}
%% \address[label2]{}

\author[WM]{Leiba Rodman\corref{cor3}}
\ead{lxrodm@math.wm.edu}
\address[WM]{Department of Mathematics, College of William and Mary,  Williamsburg, VA 23187-8795, USA}
%\fntext[label3]{}

\author[LA]{Hakan Seyalioglu}
\ead{hseyalioglu@ucla.edu}
 \address[LA]{Department of Mathematics, UCLA, Los Angeles, CA 90024, USA}

\author[WM]{Ilya M.  Spitkovsky}
\ead{ilya@math.wm.edu, imspitkovsky@gmail.com}
%\ead[url]{http://www.math.wm.edu/~ilya}
 \cortext[cor3]{Corresponding author}
 %\address{Address}

\begin{abstract}The existence and construction of common invariant cones for families of real matrices is considered. The
complete results are obtained for  $2\times 2$ matrices (with no
additional restrictions) and for families of simultaneously
diagonalizable matrices of any size. Families of matrices with a shared
dominant eigenvector are considered under some additional
conditions.
\end{abstract}

\begin{keyword} Invariant cones \sep common invariant cones \sep Vandergraft matrices
%% keywords here, in the form: keyword \sep keyword

%% PACS codes here, in the form: \PACS code \sep code

%% MSC codes here, in the form: \MSC code \sep code
%% or \MSC[2008] code \sep code (2000 is the default)
\medskip

\MSC 15A48

\end{keyword}

\end{frontmatter}

%% \linenumbers

%% main text
\section{Introduction}\label{Intro}

The theory of nonnegative matrices, and more generally of matrices
that leave invariant a convex, closed, pointed, solid cone, is classical;
we mention here the books \cite{BePle94, BaRa97} among many
others; see also \cite{Tam01} for a review of many results, including
recent ones, and extensive bibliography. More generally, real matrices
that leave invariant a convex, closed, pointed, solid cone, have been
studied in \cite{Bir67, Vander, TaSch, Tam04, Bar72, VaFa}. A complete
characterization of such matrices in terms of spectral structure was
obtained in \cite{Vander}. An interesting  application to the multiple
agents randezvous problem is given in \cite{TiFu}.

Recently, several works appeared studying  matrices having common
invariant convex, closed, pointed, solid cones. These works have been
motivated primarily by applications in Glass networks \cite{EdMcDT}
and joint spectral radius \cite[Theorem 1]{BloNe05}. Glass networks are continuous-time
switching networks used to model gene regulatory networks and
neural networks; see \cite{EdMcDT} and references there for an in
depth discussion on Glass networks.

The paper \cite{EdMcDT} actually served as a motivation for the
current paper. We develop here results on matrices having common
invariant cones. The auxiliary Section~\ref{prelim} contains necessary
notions and definitions, in particular that of a proper cone and a
dominant eigenvector. In Section~\ref{com2by2},  a full description is
given of families of $2 \times 2$ real matrices having common
invariant proper cones. As it turns out even in this case the
characterizations are rather involved, and the proofs not immediate.
Some partial results (for pairs of diagonalizable but not
simultaneously reducible matrices) in this venue were obtained in
\cite{EdMcDT}. Our approach is based on the description of all
invariant cones for a single $2\times 2$ matrix given in
Section~\ref{2by2}. In spite of its elementary nature, we did not find
this description in the literature, and include it for the sake of self
containment. Section~\ref{diag} contains the existence criterion for
(and actually a construction of)  a common invariant cone of a family
of simultaneously diagonalizable matrices, while
Section~\ref{common} provides some sufficient conditions for such a
cone to exist when the matrices share the dominant eigenvector.
Finally, Section~\ref{examples} consists of several examples
illustrating both the results obtained and their limitations.

\section{Preliminaries and definitions}\label{prelim}

Let $\mathbb{R}$ be the field of real numbers, $\mathbb{R}^n$ the set
of real $n$-component column vectors, and $\mathbb{R}^{m \times
n}$ the set of real $m\times n$ matrices. {\em All matrices in the
present paper are assumed to be real, unless explicitly stated
otherwise.} A set $\KK \subseteq \mathbb{R}^n$ is a {\em cone} if $a
\KK \subseteq \KK$ for all scalar multiples $a \geq 0$. A cone $\KK$ is
said to be {\em proper} if $\KK+\KK\subseteq\KK$ (so that  $\KK$ is
{\em convex}), {\em closed}, {\em pointed} ($\KK \cap -\KK = \{0\}$)
and {\em solid} (the interior of $\KK$ is nonempty). 

For  $X$ being a subset of $\R^n$ or $\mathbb{R}^{m \times n}$, we
denote by $\Cone X$ the smallest convex cone containing $X$ and
say that $X$ {\em generates} $\Cone X$. Of course, $\Cone X$ is
nothing but the set of all (finite) linear combinations of elements of
$X$ with non-negative coefficients. A cone having a finite generating
set is called {\em polyhedral}. Polyhedral cones are always closed.

For a square matrix $A$,  by the {\em degree} of its eigenvalue
$\lambda$ in this paper we understand  its multiplicity as a root of the
minimal polynomial of $A$ (that is, the size of the largest block, in the
Jordan canonical form of the matrix, corresponding to the eigenvalue
$\lambda$).  We will denote the eigenvalues of an $n\times n$
matrix $A$ by $\lambda_1(A),\ldots,\lambda_n(A)$ (or simply by
$\lambda_1,\ldots,\lambda_n$ if the choice of the matrix is clear from
the context), always taking $\rho(A)=\lambda_1$ provided that  the
{\em spectral radius} $\rho (A)$ of $A$ is an eigenvalue. We will call
the respective eigenvector (eigenspace) the  {\em dominant
eigenvector} (resp., {\em dominant eigenspace}) of $A$.  In case when
an eigenspace is one dimensional, we will (naturally) call it an {\em
eigenline}. We will also use the term {\em eigenray} for each of the
two rays into which an eigenline is partitioned by the origin. Finally
$\sigma (A)$ will be used to denote the set of all eigenvalues of $A$.

A cone $\KK\subseteq\mathbb{R}^n$ is said to be {\em invariant}
under $A\in \Rn$ if $Ax\in \KK$ for every $x\in \KK$. The following
remark is trivial, but will be useful in our analysis.

\begin{rem} \label{nov121} A cone $\KK={\rm Cone}\, \{v_1,\ldots, v_m\}$ is
$A$-invariant if and only if $Av_j\in \KK$ for $j=1,2,\ldots, m$.
\end{rem}

The following result was proved by Vandergraft \cite{Vander}.
\begin{thm}\label{th:V}
$A \in \Rn$ has an invariant proper cone if and only if
\begin{itemize}
\item[\em{(i)}] The spectral radius $\rho (A) \in \sigma(A)$, and
\item[\em{(ii)}] $\deg\lambda_1(A) \geq \deg \lambda_i(A)$ for
    every eigenvalue $\lambda_i(A)$  with $|\lambda_i(A)| =
    \lambda_1(A)$.
\end{itemize} If conditions {\em (i)-(ii)} hold, then also \begin{itemize}\item[\em (iii)] Any $A$-invariant
proper cone contains a dominant eigenvector of $A$.\end{itemize}
\end{thm}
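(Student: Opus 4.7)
The plan is to handle the three assertions in order: necessity of (i) and (iii) together, then necessity of (ii), and finally the sufficiency direction.

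\textbf{Necessity of (i) and proof of (iii).} Let $\KK$ be an $A$-invariant proper cone. Pick $\phi$ in the interior of the dual cone $\KK^*$ (which is also proper), so that $\phi$ is strictly positive on $\KK\setminus\{0\}$, and let $B=\{x\in\KK:\phi(x)=1\}$ be the corresponding compact convex base. After a routine perturbation $A+\varepsilon I$ to avoid $\phi(Ax)=0$, the map $T(x)=Ax/\phi(Ax)$ is a continuous self-map of $B$; Brouwer's theorem yields $v\in\KK\setminus\{0\}$ with $Av=\mu v$, $\mu\ge 0$. Applied to $A^T$ on $\KK^*$, the same argument produces $w\in\KK^*\setminus\{0\}$ with $A^Tw=\nu w$, $\nu\ge 0$. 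Then
\begin{equation*}
\mu\langle v,w\rangle=\langle Av,w\rangle=\langle v,A^Tw\rangle=\nu\langle v,w\rangle,
\end{equation*}
and $\langle v,w\rangle>0$ (for $w\in\operatorname{int}\KK^*$) forces $\mu=\nu$. Comparing with $\rho(A)=\lim_k\|A^kx\|^{1/k}$ for $x\in\operatorname{int}\KK$, where $\|A^kx\|$ and $\phi(A^kx)=\mu^k\phi(x)$ are comparable, yields $\mu=\rho(A)$. This proves (i) and (iii) simultaneously.

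\textbf{Necessity of (ii).} Assume (ii) fails, so some $\lambda_j$ with $|\lambda_j|=\rho(A)$ has degree $d_j>d_1:=\deg\lambda_1$; in particular $\lambda_j\neq\rho(A)$. Choose $x\in\operatorname{int}\KK$ with nonzero component in the top level of the generalized eigenspace for $\lambda_j$. The real Jordan form gives
\begin{equation*}
\frac{A^kx}{k^{d_j-1}\rho(A)^k}=\sum_{|\lambda|=\rho(A),\,\deg\lambda=d_j}(\lambda/|\lambda|)^k u_\lambda + o(1),
\end{equation*}
with $u_{\lambda_j}\neq 0$ and $\lambda_j/|\lambda_j|\neq 1$. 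Subsequential limits of the left-hand side span a nontrivial linear subspace; by closedness all such limits lie in $\KK$, violating pointedness.

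\textbf{Sufficiency.} Normalize $\rho(A)=1$. Since $1\in\sigma(A)=\sigma(A^T)$, pick real eigenvectors $v$ of $A$ and $\phi$ of $A^T$ with eigenvalue $1$, normalized so $\phi(v)=1$. The natural candidate for an invariant proper cone is
\begin{equation*}
\KK_0:=\operatorname{cl}\,\Cone\{A^ku:k\ge 0,\,u\in U\},
\end{equation*}
where $U$ is a small compact convex neighborhood of $v$. Invariance $A\KK_0\subseteq\KK_0$ is built in. Condition (ii) controls the growth of $A^k$: in a real Jordan basis adapted to the splitting of $\R^n$ into the generalized eigenspaces of peripheral versus interior eigenvalues, $\|A^ku\|=O(k^{d_1-1})$ uniformly for $u\in U$. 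Under this control, choosing $U$ sufficiently small and suitably aligned, one obtains (a) solidity, from $v\in\operatorname{int}\KK_0$ by construction, and (b) pointedness, via a strictly positive linear functional on $\KK_0\setminus\{0\}$ assembled from $\phi$ and its iterates $(A^T)^k\phi$.

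\textbf{Main obstacle.} The sufficiency direction is the heart of the proof. A priori, a forward-orbit cone like $\KK_0$ could fail to be pointed: if a peripheral eigenvalue had a higher-order Jordan block than $\lambda_1$, the iterates $A^ku$ would develop oscillations of order $k^{d_j-1}$ swamping the $v$-component and producing limit points in $\KK_0\cap-\KK_0$. Condition (ii) is precisely what rules this out, bounding the oscillations by $k^{d_1-1}$ so that, after an appropriate weighted rescaling, the $v$-direction dominates. Making this rescaling and the associated separating functional explicit — essentially a Jordan-block-by-Jordan-block inductive construction — is the technical crux of the argument.
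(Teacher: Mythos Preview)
The paper does not prove this theorem at all: it is stated as Vandergraft's result and cited to \cite{Vander} without argument. So there is no ``paper's own proof'' to compare against; what follows is an assessment of your attempt on its merits.

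Your necessity arguments are on the right track but contain slips. In the proof of (i)/(iii), you conflate the interior functional $\phi\in\operatorname{int}\KK^*$ (chosen at the outset) with the eigenvector $w\in\KK^*$ of $A^T$ produced by Brouwer. The line ``$\phi(A^kx)=\mu^k\phi(x)$'' is false as written: $\phi$ is not an eigenvector of $A^T$. If you replace $\phi$ by $w$ you get $w(A^kx)=\nu^k w(x)$, but then comparability of $w(\cdot)$ with $\|\cdot\|$ on $\KK$ requires $w\in\operatorname{int}\KK^*$, which you have not established. Likewise, the step ``$\langle v,w\rangle>0$'' assumes one of $v,w$ is interior, but both were only shown to lie in the respective cones. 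These are repairable (e.g.\ via the resolvent $(tI-A)^{-1}=\sum_{k\ge 0}t^{-k-1}A^k$ mapping $\KK$ to $\KK$ for $t>\rho(A)$, combined with the fact that $\phi\in\operatorname{int}\KK^*$ is norm-equivalent on $\KK$), but as written the argument does not close. Your sketch for (ii) is essentially correct.

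The sufficiency direction is, by your own admission, incomplete. You describe the candidate cone $\KK_0$ and correctly identify that the issue is pointedness, and you correctly diagnose that condition (ii) is exactly what caps the oscillatory growth at order $k^{d_1-1}$. But you then write that ``making this rescaling and the associated separating functional explicit\ldots is the technical crux of the argument'' --- and stop. That is not a proof; it is a plan for one. Vandergraft's original construction does carry out precisely this Jordan-block-by-Jordan-block analysis, and it is genuinely delicate: one must handle the interaction between the nilpotent parts of peripheral blocks and the dominant block so that a single linear functional separates $\KK_0$ from $-\KK_0$. Until that construction is actually written down, the sufficiency half remains unproved.
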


For spectral criteria for existence of polyhedral proper invariant cones see 
\cite{TaSch, VaFa}.
 
We will be using the term {\em Vandergraft matrices} for  real matrices
satisfying conditions (i) and (ii) of Theorem \ref{th:V}, denoting the set
of all such $n\times n$ matrices by $\pRn$.

\section{Invariant proper cones for \boldmath{$2 \times 2$} matrices}\label{2by2}

It is very easy to characterize matrices in $\pRt$. Namely, condition (i)
of Theorem~\ref{th:V} is  equivalent to \eq {v2} (\trace A)^2\geq 4\det
A, \quad \trace A\geq 0, \en the first inequality in (\ref{v2}) meaning
simply that the eigenvalues of $A$ are real while the second inequality
guarantees that the one with the bigger absolute value is
non-negative. Since condition (ii) then holds automatically, a $2\times
2$ matrix $A$ is Vandergraft if and only if it satisfies (\ref{v2}).

Conditions (\ref{v2}) hold, in particular, when both eigenvalues $\lambda_1,\lambda_2$ of $A$ are
non-negative. Description of all $A$-invariant proper cones in this
case is given by the following two theorems, dealing with
diagonalizable and non-diagonalizable matrices $A$ separately.  Of
course, in the former situation only the case $\lambda_1\neq
\lambda_2$ is of interest, because otherwise $A$ is a scalar matrix
which leaves every cone invariant.

\begin{thm}\label{th:pd}Let a $2\times 2$ matrix $A$ be diagonalizable, with $\lambda_1>\lambda_2\geq 0$.
Then a proper cone $\KK\subset\R^2$ is $A$-invariant if and only if it
contains an eigenvector of $A$ corresponding to $\lambda_1$ and its
interior does not intersect the eigenline of $A$ corresponding to
$\lambda_2$. \end{thm}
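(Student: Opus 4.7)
The plan is to work in the eigenbasis $(v_1,v_2)$ of $A$, in which $A=\diag(\lambda_1,\lambda_2)$, the $\lambda_1$-eigenline is $L_1=\{x_2=0\}$ and the $\lambda_2$-eigenline is $L_2=\{x_1=0\}$. For necessity, the first condition is Theorem~\ref{th:V}(iii). For the second condition I argue by contradiction: suppose a nonzero $v=cv_2$ lies in $\operatorname{int}(\KK)$ and let $w\in\KK$ be a $\lambda_1$-eigenvector as supplied by (iii). Since $v$ is interior, $v-\varepsilon w\in\KK$ for all sufficiently small $\varepsilon>0$, and iterating $A$ then rescaling by the positive factor $\lambda_1^{-n}$ gives $(\lambda_2/\lambda_1)^n v-\varepsilon w\in\KK$. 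Because $\lambda_2/\lambda_1\in[0,1)$, letting $n\to\infty$ and using closedness of $\KK$ yields $-\varepsilon w\in\KK$, so $-w\in\KK$, contradicting pointedness.

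For sufficiency, assume $\KK$ contains $v_1=(1,0)$ and $\operatorname{int}(\KK)\cap L_2=\emptyset$. I first show $\KK\subseteq\{x_1\geq 0\}$: the set $\operatorname{int}(\KK)$ is nonempty, open and convex (hence connected) and avoids $L_2$, so it lies in one of the two open half-planes determined by $L_2$; since $\KK$ is closed convex with nonempty interior, $\KK=\overline{\operatorname{int}(\KK)}$, so $\KK$ lies in the corresponding closed half-plane, and $v_1\in\KK$ pins this down to $\{x_1\geq 0\}$. Parameterizing the $2$-dimensional proper cone $\KK\subseteq\{x_1\geq 0\}$ by its angular extent $[\alpha,\beta]$ in the $(v_1,v_2)$-coordinates, the two hypotheses translate to $-\pi/2\leq\alpha\leq 0\leq\beta\leq\pi/2$, with extreme rays at $\theta=\alpha,\beta$.

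By Remark~\ref{nov121} it suffices to check $A$-invariance on these two rays. For $|\theta|<\pi/2$ the image $A(\cos\theta,\sin\theta)=(\lambda_1\cos\theta,\lambda_2\sin\theta)$ has angle $\theta'$ with $\tan\theta'=(\lambda_2/\lambda_1)\tan\theta$; since $\lambda_2/\lambda_1\in[0,1)$, $\theta'$ has the same sign as $\theta$ and $|\theta'|\leq|\theta|$, so $\theta'\in[\alpha,\beta]$. For $\theta=\pm\pi/2$ the extreme ray lies on $L_2$ and $A$ sends it to a positive multiple of itself (if $\lambda_2>0$) or to $0$ (if $\lambda_2=0$), in either case staying in $\KK$. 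The main obstacle is the geometric reduction — converting the two hypotheses into the angular normalization $-\pi/2\leq\alpha\leq 0\leq\beta\leq\pi/2$ — after which $A$-invariance is just the transparent contractive behavior of $\theta\mapsto\arctan\bigl((\lambda_2/\lambda_1)\tan\theta\bigr)$ toward $0$.
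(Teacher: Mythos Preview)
Your proof is correct and follows the same overall strategy as the paper: the ``only if'' direction is identical (iterate $A$, rescale by $\lambda_1^{-n}$, pass to the limit to force $-w\in\KK$), and for the ``if'' direction both arguments reduce to checking the two generating rays via Remark~\ref{nov121}. The only difference is bookkeeping in the ``if'' part: the paper normalizes the generators as $u_1+u_2$ and $u_1-xu_2$ ($x\ge 0$) and exhibits $Av_1,Av_2$ explicitly as nonnegative combinations of $v_1,v_2$, whereas you parametrize rays by angle and use the contraction $\theta\mapsto\arctan\bigl((\lambda_2/\lambda_1)\tan\theta\bigr)$ toward $0$. Your angular version has the small bonus of cleanly handling the edge case where an extreme ray lies on $L_2$ (i.e.\ $\theta=\pm\pi/2$), which the paper's parametrization $v_1=u_1+u_2$ does not literally cover.
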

\begin{proof}{\sl ``Only if" part}. An $A$-invariant
proper cone $\KK$ must contain an eigenvector of $A$
corresponding to $\lambda_1$, as follows from Theorem~\ref{th:V},
part (iii).  Denote this vector by $u_1$ and suppose for a moment that
there is an eigenvector $u_2$ of $A$ corresponding to the eigenvalue
$\lambda_2$ and lying in the interior of $\KK$. Then for sufficiently
large $M>0$ also $-u_1+Mu_2\in \KK$, and for all $n=1,2,\ldots$,
\[  (\lambda_1^{-1}A)^n(-u_1+Mu_2)=-u_1+M(\lambda_2/\lambda_1)^nu_2\in\KK. \]
Letting $n\to\infty$, from the closedness of $\KK$ we conclude that
$-u_1\in\KK$. This, however, contradicts pointedness of $\KK$.

{\sl ``If" part}. Any proper cone in $\R^2$ is  generated by two
linearly independent vectors: $A=\Cone\{v_1,v_2\}$. The conditions
imposed on $\KK$ mean that, after appropriate scalings, its
generating vectors can be written as
\[ v_1=u_1+u_2,\quad v_2=u_1-xu_2,\] where $x\geq 0$.
(Here $u_1$, $u_2$ are eigenvectors corresponding to $\lambda_1$, $\lambda_2$, respectively.) Then \[ Av_1
(=\lambda_1u_1+\lambda_2u_2) =
\frac{x\lambda_1+\lambda_2}{1+x}v_1+\frac{\lambda_1-\lambda_2}{1+x}v_2\in\KK
\] and \[ Av_2 (=\lambda_1u_1-x\lambda_2u_2) =
\frac{x(\lambda_1-\lambda_2)}{1+x}v_1+\frac{\lambda_1+x\lambda_2}{1+x}v_2\in\KK.
\] $A$-invariance of $\KK$ therefore  follows from Remark~\ref{nov121}.
\end{proof}

Let now $A\in\R^{2\times 2}$ be non-diagonalizable. Then, for any
$v\in\R^2$, \eq{av} Av=\lambda v+xu,\en where  $\lambda$ is the
eigenvalue of $A$, $u$ is its (arbitrarily fixed) eigenvector, and $x
\in\R$. We will say that $v$ is {\em positively/negatively associated}
with $u$  ({\em relative to} $A$, if there is a need to mention the matrix
explicitly) if in (\ref{av}) $\pm x>0$. Observe that $x=0$ if and only if
$v$ belongs to the eigenline of $A$, that is, is a scalar multiple of $u$.

Of course, $v$ is positively associated with $u$ if and only if $-v$ is
negatively associated with $u$ if and only if $-v$ is positively
associated with $-u$. Geometrically speaking, the plane $\R^2$ is
partitioned by the eigenline of $A$ into two open half-planes; one
consisting of vectors positively associated with $u$, and the other of
vectors negatively associated with $u$.

\begin{thm}\label{th:pnd}Let $A\in\R^{2\times 2}$ be a non-diagonalizable matrix
with the eigenvalue $\lambda\geq 0$. Then a proper cone $\KK$ is
$A$-invariant if and only if it is given by $\KK=\Cone\{u,v\}$, where $u$
is an eigenvector of $A$ and $v$ is positively associated with $u$
relative to $A$.
\end{thm}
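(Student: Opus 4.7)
The plan is to fix a Jordan basis $\{u,w\}$ for $A$ with $Au=\lambda u$ and $Aw=\lambda w+u$. In this basis any $v=au+bw$ satisfies $Av=\lambda v+bu$, so ``$v$ positively associated with $u$'' translates cleanly to $b>0$. I will also use throughout that every proper cone in $\R^2$ is generated by exactly two linearly independent vectors, namely its two extreme rays.

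The \emph{if} direction will then follow from Remark~\ref{nov121}: $Au=\lambda u$ is a non-negative multiple of $u$ since $\lambda\geq 0$, while $Av=\lambda v+xu$ with $x>0$ is a non-negative combination of $u$ and $v$. Positive association also forces $v\notin\R u$, so $\Cone\{u,v\}$ is indeed a proper cone.

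For the \emph{only if} direction, Theorem~\ref{th:V}(iii) supplies an eigenvector in $\KK$; after possibly replacing $u$ by $-u$, assume $u\in\KK$. The key intermediate step is to show that the ray through $u$ is extreme in $\KK$. If it is not, then $u-\delta w\in\KK$ for some small $\delta>0$. For $\lambda>0$ an easy induction gives
\[
A^n(u-\delta w)=(\lambda^n-n\delta\lambda^{n-1})u-\delta\lambda^n w,
\]
and dividing by $|\lambda^n-n\delta\lambda^{n-1}|=\lambda^{n-1}(n\delta-\lambda)$ for $n>\lambda/\delta$ produces vectors in $\KK$ converging to $-u$; by closedness $-u\in\KK$, contradicting pointedness. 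For $\lambda=0$ the same conclusion is immediate from $A(u-\delta w)=-\delta u\in\KK$.

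Once $u$ is known to be extreme, write $\KK=\Cone\{u,v\}$ with $v=au+bw$ and $b\neq 0$. Using $Av=\lambda v+bu$ and matching coefficients in $Av=\alpha u+\beta v$ with $\alpha,\beta\geq 0$, the linear independence of $u$ and $v$ forces $\alpha=b$ and $\beta=\lambda$, hence $b>0$; that is, $v$ is positively associated with $u$. The main obstacle is the extremality step, and within it the $\lambda>0$ subcase, where one must pick the right normalization of $A^n(u-\delta w)$ to extract the limit $-u$; the rest of the argument is essentially routine linear algebra.
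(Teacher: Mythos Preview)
Your proof is correct and follows essentially the same route as the paper's. The only organizational difference is that the paper first proves the slightly stronger claim that \emph{no} vector negatively associated with $u$ can lie in $\KK$ (via the same $A^n$ limit computation you use), and then deduces in one stroke both that $u$ lies on the boundary and that the second generator is positively associated; you instead establish extremality of $u$ using the particular vector $u-\delta w$ and handle the second generator separately by coefficient matching. The core idea and the key limiting argument are the same.
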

\begin{proof}{\sl ``If" part}. Since $\lambda\geq 0$, from
(\ref{av}) it follows that $Av\in\Cone\{u,v\}$, because $x\geq 0$.
Obviously, $Au=\lambda u$ also lies in $\Cone\{u,v\}$. The desired
result now follows from Remark~\ref{nov121}.

 {\sl ``Only if" part}. Let a proper cone $\KK$ be $A$-invariant. Due to Theorem~\ref{th:V}(iii),
 there is an eigenvector of $A$ lying in $\KK$. Denoting it by $u$, observe that
vectors  negatively associated with $u$ cannot lie in $\KK$. Indeed, if
$\lambda=0$ and  (\ref{av}) holds with $x<0$, then \[ v\in\KK
\Longrightarrow -u\in\KK,\] which contradicts the pointedness of
$\KK$.
For $\lambda>0$, (\ref{av}) implies \[ A^nv=\lambda^n
v+nx\lambda^{n-1}u,\quad n=1,2,\ldots.\] Consequently, if $v\in\KK$ and
$x<0$, then \[
-u=\lim_{n\to\infty}\frac{1}{n\abs{x}}\lambda^{1-n}A^nv\in\KK
\] --- once again, a contradiction with the pointedness of $\KK$.

Since in every neighborhood of $u$ there are vectors negatively
associated with it, $u$ cannot lie in the interior of $\KK$. Thus, it must
be one of its generating vectors. The other generating vector $v$,
being linearly independent with $u$,  must be positively associated
with it. So, $\KK$ indeed is of the desired form. \end{proof}
\begin{cor}\label{cor:pnd} For non-diagonalizable Vandergraft $2\times 2$ matrices,
the dominant eigenvector lies on the boundary of their invariant
proper cones.\end{cor}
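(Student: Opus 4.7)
The plan is to derive this corollary directly from Theorem~\ref{th:pnd} together with the fact that a non-diagonalizable $2 \times 2$ Vandergraft matrix has a one-dimensional eigenspace. First I would recall that a non-diagonalizable $2 \times 2$ matrix has a single eigenvalue $\lambda$ of algebraic multiplicity two and geometric multiplicity one; the Vandergraft condition (i) forces $\lambda = \rho(A) \geq 0$, and the eigenspace is an eigenline. Thus the term \emph{dominant eigenvector} here refers (up to nonzero scalar) to any nonzero vector on this unique eigenline.

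Next, I would invoke Theorem~\ref{th:pnd} to write an arbitrary $A$-invariant proper cone as $\KK = \Cone\{u,v\}$, where $u$ is an eigenvector of $A$ and $v$ is positively associated with $u$. Since $v$ is by construction linearly independent of $u$ (otherwise the associated scalar $x$ in (\ref{av}) would be zero, not positive), the two generators $u$ and $v$ are distinct extreme rays, and every vector in the interior of $\KK$ is a strictly positive combination of both. In particular $u$, having zero coefficient on $v$, lies on the boundary of $\KK$.

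Finally I would argue that any dominant eigenvector sitting inside $\KK$ must be a nonnegative scalar multiple of $u$: by pointedness, $\KK$ cannot contain both $u$ and $-u$, and since the eigenline is one-dimensional every eigenvector is proportional to $u$. Hence any dominant eigenvector contained in $\KK$ lies on the ray $\{tu : t \geq 0\}$, which is an edge of $\KK$ and therefore part of its boundary. This yields the claim.

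There is not really a main obstacle here; the only thing to be careful about is matching the terminology so that the generator $u$ supplied by Theorem~\ref{th:pnd} is identified with the dominant eigenvector, which is immediate once we note that a non-diagonalizable $2 \times 2$ matrix has only one eigenline.
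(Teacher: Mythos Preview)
Your proposal is correct and follows exactly the route the paper intends: the paper states this corollary immediately after Theorem~\ref{th:pnd} without a separate proof, treating it as an immediate consequence of the description $\KK=\Cone\{u,v\}$ given there. You have simply spelled out the details (one-dimensional eigenline, $u$ being an extreme ray, pointedness excluding $-u$) that the paper leaves implicit.
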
 As follows from Theorem~\ref{th:pd}, for
diagonalizable $2\times 2$ matrices with positive eigenvalues the
dominant vector can lie both in the interior and on the boundary of
their invariant cones.

We turn now to the remaining case of matrices $A$ with negative
determinants. Denote the eigenvalues of $A$ by $\lambda_1 (>0)$
and $\lambda_2 (<0)$, and let $u_1, u_2$ stand for the respective
eigenvectors.
\begin{thm}\label{th:ne}Let $A\in\R^{2\times 2}$ and $\det A<0$.
Then a proper cone $\KK\subset\R^2$ is $A$-invariant if and only if it
can be represented as $\KK =\Cone\{v_1,v_2\}$, where
\eq{vu}v_j=u_1+c_ju_2 \quad (j=1,2)\en and  \eq{ne} c_1>0,\
c_2<0,\quad
\frac{\lambda_1}{\lambda_2}\leq\frac{c_1}{c_2}\leq\frac{\lambda_2}{\lambda_1}.\en
\end{thm}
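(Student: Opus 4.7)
The plan is to prove both directions by direct algebraic manipulation in the basis $\{u_1, u_2\}$, using Remark~\ref{nov121} to reduce $A$-invariance to checking $Av_1, Av_2 \in \KK$.

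For the ``if'' direction, I would invert the change of basis implicit in (\ref{vu}), writing
\[ u_1 = \frac{c_1 v_2 - c_2 v_1}{c_1 - c_2}, \qquad u_2 = \frac{v_1 - v_2}{c_1 - c_2}, \]
and compute $Av_j = \lambda_1 u_1 + c_j\lambda_2 u_2$ in the $\{v_1, v_2\}$ basis. A short calculation produces
\[ Av_1 = \frac{c_1\lambda_2 - c_2\lambda_1}{c_1 - c_2}\, v_1 + \frac{c_1(\lambda_1 - \lambda_2)}{c_1 - c_2}\, v_2, \]
and symmetrically for $Av_2$. By Remark~\ref{nov121}, $A$-invariance is equivalent to nonnegativity of all four coefficients; this translates directly into the sign conditions $c_1 > 0 > c_2$ and the ratio bound in (\ref{ne}), after clearing denominators (noting $c_1 - c_2 > 0$) and flipping inequalities where one divides by the negative quantities $c_2$ or $\lambda_2$.

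For the ``only if'' direction, let $\KK$ be an $A$-invariant proper cone. Theorem~\ref{th:V}(iii) provides a dominant eigenvector in $\KK$, so we may take $u_1 \in \KK$. Neither $u_2$ nor $-u_2$ can belong to $\KK$, since $Au_2 = \lambda_2 u_2$ with $\lambda_2 < 0$ would immediately yield both via $A$-invariance, contradicting pointedness. Geometrically, the eigenline spanned by $u_2$ therefore meets $\KK$ only at the origin, so $\KK$ lies in the open half-plane $\{\alpha u_1 + \beta u_2 : \alpha > 0\}$ (together with the origin). Every vector in this half-plane is a positive multiple of some $u_1 + cu_2$, hence the two generating rays of $\KK$ can be written as $v_j = u_1 + c_j u_2$ with $c_1 \neq c_2$ by linear independence; relabel so that $c_1 > c_2$.

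It remains to extract the precise conditions on $c_1, c_2$ from $Av_1, Av_2 \in \KK$. A general point of $\Cone\{v_1, v_2\}$ has the form $(\alpha + \beta) u_1 + (\alpha c_1 + \beta c_2) u_2$ with $\alpha, \beta \geq 0$, so its ``$c$-coordinate''---the ratio of $u_2$- to $u_1$-component---lies in $[c_2, c_1]$. Since $Av_j = \lambda_1 u_1 + c_j \lambda_2 u_2$ has $c$-coordinate $c_j\lambda_2/\lambda_1$, invariance requires $c_2 \leq c_j\lambda_2/\lambda_1 \leq c_1$ for $j=1,2$. The main obstacle will be the ensuing case analysis on the signs of $c_1, c_2$: if both $c_1, c_2 \geq 0$ or both $\leq 0$, then the negativity of $\lambda_2/\lambda_1$ forces these four inequalities to collapse to $c_1 = c_2 = 0$, contradicting linear independence. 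Hence $c_1 > 0 > c_2$, after which the four inequalities reduce, via appropriate sign-flips when dividing by the negative quantities $c_2$ and $\lambda_2$, to precisely $\lambda_1/\lambda_2 \leq c_1/c_2 \leq \lambda_2/\lambda_1$. Note that this interval being non-empty automatically enforces $\lambda_1 \geq |\lambda_2|$, so Vandergraft's spectral condition emerges as a consequence rather than needing to be assumed separately.
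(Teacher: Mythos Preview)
Your proposal is correct and follows essentially the same route as the paper. Both arguments establish that the generators of $\KK$ must take the form $u_1+c_ju_2$ (by excluding $\pm u_2$ from $\KK$ via pointedness), then reduce $A$-invariance to the condition that the ``$c$-coordinate'' $\frac{\lambda_2}{\lambda_1}c_j$ of $Av_j$ lies in $[c_2,c_1]$, and finally unpack this into (\ref{ne}). The only minor differences are stylistic: in the ``if'' direction you explicitly invert the basis and check four coefficient signs, whereas the paper simply tracks the $c$-coordinate; and in the ``only if'' direction you invoke Theorem~\ref{th:V}(iii) to place $u_1$ in $\KK$, while the paper argues directly from the exclusion of $\pm u_2$ and then normalizes the sign of $u_1$.
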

\begin{proof}An $A$-invariant pointed cone cannot  contain eigenvectors of $A$
corresponding to a negative eigenvalue. Thus, all vectors $v\in\KK$
(in particular, its generators) in their expansion along the eigenbasis
$\{u_1,u_2\}$ have the same sign coefficients corresponding to
$u_1$. Switching from $u_1$ to $-u_1$ if needed, we may without loss
of generality suppose that these coefficients are positive. Scaling
$v_1$ and $v_2$ if necessary, we arrive at (\ref{vu}). Yet another
change (from $u_2$ to $-u_2$, or flipping $v_1$ with $v_2$) allows us
without loss of generality suppose that $c_1>c_2$.

On the other hand, for $v_j$ given by (\ref{vu}) we have \[
Av_j=\lambda_1(u_1+\frac{\lambda_2}{\lambda_1}c_ju_2).\]
Consequently, $Av_j$ lie in the cone $\KK$ if and only if the numbers
$\lambda_2\lambda_1^{-1}c_j$ lie in $[c_2,c_1]$. This is equivalent
to (\ref{ne}).  \end{proof}

\begin{cor}\label{cor:neg} Let $A$ be a $2\times 2$ Vandergraft matrix with
negative determinant. Then  any dominant eigenvector of $A$ lies in the interior of
those $A$-invariant proper cones that contain the eigenvector.\end{cor}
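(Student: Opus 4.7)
The plan is to apply Theorem~\ref{th:ne} directly. Given an $A$-invariant proper cone $\KK$ that contains a dominant eigenvector $w$ of $A$, I would first align $w$ with the $u_1$ used in the normalization of Theorem~\ref{th:ne}, possibly replacing $u_1$ with $-u_1$ so that $w$ is a positive multiple of $u_1$ (pointedness of $\KK$ prevents both $\pm u_1$ from lying in $\KK$ simultaneously). The theorem then puts $\KK$ in the form $\Cone\{v_1,v_2\}$ with $v_j=u_1+c_j u_2$, $c_1>0$, $c_2<0$.

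The key step is to express $u_1$ itself as a linear combination $\alpha v_1+\beta v_2$ of the generators of $\KK$ and check that both coefficients are strictly positive. Equating coefficients in the eigenbasis $\{u_1,u_2\}$ produces the $2\times 2$ linear system $\alpha+\beta=1$, $\alpha c_1+\beta c_2=0$, and the signs $c_1>0>c_2$ force both components of its solution to be strictly positive. By the standard description of the interior of a two-dimensional simplicial cone as the set of strictly positive combinations of its generators, this places $u_1$ in the interior of $\KK$, which is the conclusion sought.

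I do not anticipate any real technical obstacle; the corollary reduces to a short algebraic check, and the argument is in fact already implicit in the proof of Theorem~\ref{th:ne}. The only points meriting a sentence of care are, first, confirming that the dominant eigenspace is one-dimensional (which follows from $\det A<0$, forcing $\lambda_1\neq\lambda_2$ with neither equal to zero), so that the phrase ``any dominant eigenvector'' refers unambiguously to nonzero scalar multiples of $u_1$; and second, making the sign normalization of $u_1$ explicit, so that the conclusion properly covers whichever of $\pm u_1$ happens to lie in $\KK$.
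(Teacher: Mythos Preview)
Your proposal is correct and is exactly the argument the paper has in mind: the corollary is stated in the paper without proof precisely because the representation $\KK=\Cone\{u_1+c_1u_2,\,u_1+c_2u_2\}$ with $c_1>0>c_2$ from Theorem~\ref{th:ne} immediately yields $u_1=\frac{-c_2}{c_1-c_2}v_1+\frac{c_1}{c_1-c_2}v_2$ with strictly positive coefficients. Your remarks on the sign normalization of $u_1$ and the one-dimensionality of the dominant eigenspace are the only points worth spelling out, and you have handled them correctly.
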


Note that conditions (\ref{ne}) are consistent if and only if $\det A<0$
and $\trace A\geq 0$, which of course agrees with (\ref{v2}). If this is
indeed the case, for every non-zero vector $v$ different from the
eigenvectors of $A$ there exist $A$-invariant proper cones $\KK$ with
$v$ being one of the generators.  The second generators of these
cones form yet another convex cone, described by (\ref{ne}) with one
of $c_j$ being determined by $v$ and the other serving as a
parameter. The latter cone degenerates into a single ray if and only if
$\trace A=0$ (equivalently: $A^2$ is a scalar multiple of the identity),
when necessarily $c_1=-c_2$.

It is very easy to produce directly an $A$-invariant cone with arbitrarily
chosen generator $v$ for any $2\times 2$ matrices $A$ with \eq{dt}
\det A\leq 0,  \quad \trace A\geq 0.\en
\begin{lem}\label{l:vAv}Let $A\in\R^{2\times 2}$ satisfy {\em (\ref{dt})}. Then
$\KK_v:=\Cone\{v,Av\}$ is $A$-invariant for any $v\in\R^2$, $v\neq 0$.\end{lem}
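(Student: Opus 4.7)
The plan is to apply the Cayley--Hamilton theorem to $A$ and then invoke Remark~\ref{nov121}. By construction $Av\in\KK_v$, so by Remark~\ref{nov121} all that needs to be verified is that $A(Av)=A^2v$ lies in $\KK_v=\Cone\{v,Av\}$.

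Cayley--Hamilton for a $2\times 2$ matrix gives
\[
A^2=(\trace A)\,A-(\det A)\,I,
\]
and therefore
\[
A^2 v=(\trace A)(Av)+(-\det A)\,v.
\]
Under hypothesis (\ref{dt}) both coefficients $\trace A$ and $-\det A$ are non-negative, so $A^2v$ is a non-negative linear combination of $v$ and $Av$, i.e.\ $A^2v\in\KK_v$. Combined with the trivial inclusion $Av\in\KK_v$, Remark~\ref{nov121} immediately yields $A$-invariance of $\KK_v$.

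The only mild subtlety is the degenerate situation when $v$ and $Av$ are linearly dependent, that is, when $v$ is an eigenvector of $A$ with some eigenvalue $\lambda$. In this case $\KK_v$ is either the single ray $\{tv:t\geq 0\}$ (when $\lambda\geq 0$) or the entire line spanned by $v$ (when $\lambda<0$), both of which are clearly $A$-invariant; alternatively, one can simply note that the Cayley--Hamilton identity above continues to express $A^2v$ as a non-negative combination of $v$ and $Av$ and hence places $A^2v$ in $\KK_v$ regardless of linear dependence, so no separate argument is really needed.

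I do not expect any significant obstacle: the proof is essentially a one-line application of Cayley--Hamilton, with the signs of the coefficients being exactly what hypothesis (\ref{dt}) guarantees.
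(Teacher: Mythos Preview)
Your proof is correct and essentially identical to the paper's own argument: both simply note that $Av\in\KK_v$ by construction and then use Cayley--Hamilton to write $A^2v=(\trace A)(Av)+(-\det A)v$, concluding from (\ref{dt}) that the coefficients are non-negative. Your added discussion of the degenerate eigenvector case is not needed (the Cayley--Hamilton computation covers it, as you yourself observe), but it is not wrong.
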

Of course, $\KK_v$ is proper if and only if $v$ is not an
eigenvector of $A$.
\begin{proof}Indeed, $\KK_v$ is generated by $v$ and $Av$. The first of these generators
is mapped by $A$ into $\KK_v$ by construction, and \[
A(Av)=A^2v=(\trace A)(Av)+(-\det A)v\in\KK_v\] due to the
Cayley-Hamilton theorem. \end{proof} This simple observation will
become useful in the next section.

\section{Common invariant cones for families of \boldmath{$2\times 2$} matrices}\label{com2by2}

Let ${\mathcal A}=\{A_1,\ldots,A_n\}$ be a finite family of $2\times 2$
real matrices. An $\mathcal A$-invariant proper cone by definition is
$A_j$-invariant for all $j=1,\ldots,n$, and in order for that to be possible
each of the $A_j$'s  must be a Vandergraft matrix.

In particular, matrices of the form $cI$ with $c<0$ preclude the
existence of $\mathcal A$-invariant proper cones. On the other hand,
presence (or absence) of matrices $cI$ with $c\geq 0$ in $\mathcal
A$ is irrelevant. All such matrices (if any) can be deleted from
$\mathcal A$ but may as well be left intact.

We first consider the case when all the matrices $A_j$ share a
dominant eigenvector $u$.  If several of them are non-diagonalizable, we will say that they
have the {\em same orientation} if the sets of vectors positively
associated with $u$ relative to these matrices coincide (of course, the
sets of vectors negatively associated with $u$ then coincide as well).
This happens if and only if in a basis containing $u$ the off diagonal
elements of these matrices are all of the same sign.

\begin{thm}\label{th:comeg}Let ${\mathcal A}=\{A_1,\ldots, A_n\}$ be a family of
 $2\times 2$ Vandergraft matrices sharing the same dominant eigenvector $u$.
Then  there exists an $\mathcal A$-invariant proper cone $\KK$   if and
only if either
\begin{itemize}\item[{\em (i)}] all $A_j$ are diagonalizable, and those of them (if
    any) which have \newline $\det A_j<0$, $\trace A_j=0$ are scalar
    multiples of each other, or \item[{\em (ii)}] all $A_j$ have
    non-negative determinants, and those of them which are not
    diagonalizable (if any) have the same orientation.
    \end{itemize}\end{thm}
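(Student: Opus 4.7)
The plan is to handle necessity by a case split on whether $\mathcal A$ contains a non-diagonalizable matrix, and sufficiency by exhibiting an explicit thin cone about the shared dominant eigenvector $u$.

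For necessity, let $\KK$ be a common invariant proper cone. If some $A_j\in\mathcal A$ is non-diagonalizable, then since $A_j\in\pRt$ its repeated eigenvalue equals $\rho(A_j)\ge 0$, so $\det A_j\ge 0$, and Corollary~\ref{cor:pnd} places $u$ on the boundary of $\KK$. If any $A_k$ had $\det A_k<0$, Corollary~\ref{cor:neg} would force the same vector $u$ into the interior of $\KK$---a contradiction---so every matrix in $\mathcal A$ has $\det\ge 0$. Theorem~\ref{th:pnd} then writes $\KK=\Cone\{u,v\}$ with a single generator $v$ that must be positively associated with $u$ relative to each non-diagonalizable $A_j$, proving (ii). If instead all $A_j$ are diagonalizable and two of them $A_j,A_k$ satisfy $\det<0$ and $\trace=0$, Theorem~\ref{th:ne} specialized to $\trace=0$ forces $\KK$ to be simultaneously of the symmetric forms $\Cone\{u+cu_2^{(j)},u-cu_2^{(j)}\}$ and $\Cone\{u+c'u_2^{(k)},u-c'u_2^{(k)}\}$. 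Writing $u_2^{(k)}=\alpha u+\beta u_2^{(j)}$ and matching these two descriptions of the generator rays in $(u,u_2^{(j)})$-coordinates makes the $\alpha$-contributions cancel and forces $\alpha=0$; then $A_j$ and $A_k$ share an eigenbasis with eigenvalues $\pm\lambda_j$ and $\pm\lambda_k$, so they are scalar multiples, giving (i).

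For sufficiency I construct a thin cone about $u$ in one of two shapes. Under (ii), pick $v$ positively associated with $u$ relative to every non-diagonalizable $A_j$ (such a $v$ exists by the shared-orientation hypothesis; any $v$ not collinear with $u$ works when no non-diagonalizable matrix is present) and set $\KK_\epsilon:=\Cone\{u,u+\epsilon v\}$. A short expansion using $A_jv=\lambda_j v+x_j u$ with $x_j>0$ shows $u+\epsilon v$ is still positively associated with $u$ relative to each non-diagonalizable $A_j$, so Theorem~\ref{th:pnd} yields invariance; for diagonalizable $A_k$ with $\det\ge 0$ the wedge $\KK_\epsilon$ is so thin that its interior misses every $\lambda_2^{(k)}$-eigenline for small $\epsilon$, so Theorem~\ref{th:pd} applies. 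Under (i) I instead set $\KK_\epsilon:=\Cone\{u+\epsilon v,u-\epsilon v\}$, taking $v=u_2^{(0)}$ if some matrix has $\det<0,\trace=0$ (the scalar-multiple hypothesis pins down the common negative eigenline) and any $v$ not collinear with $u$ otherwise. Expanding the generators of $\KK_\epsilon$ in each $u_2^{(j)}$-basis shows the resulting ratio $c_1/c_2$ tends to $-1$ as $\epsilon\to 0$; this lies in the interior of the Vandergraft interval $[\lambda_1^{(j)}/\lambda_2^{(j)},\lambda_2^{(j)}/\lambda_1^{(j)}]$ whenever $\trace A_j>0$, and equals $-1$ exactly when $\trace A_j=0$ by our choice of $v$. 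Diagonalizable matrices with $\det\ge 0$ are handled again by the thin-wedge argument, now with $u$ in the interior of $\KK_\epsilon$.

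The main obstacle I anticipate is the necessity step in the all-diagonalizable case: extracting from the equality of two symmetric-cone representations of $\KK$---one in each of the eigenbases of $A_j$ and $A_k$---the collinearity of $u_2^{(j)}$ and $u_2^{(k)}$. One has to track which generator of one representation corresponds to which of the other and verify in either matching that the cross term $\alpha$ in $u_2^{(k)}=\alpha u+\beta u_2^{(j)}$ is forced to vanish.
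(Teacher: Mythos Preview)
Your proof is correct, and it diverges from the paper's in two interesting places.

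For sufficiency under (i) when some $A_j$ have negative determinant, the paper builds the larger cone $\KK=\Cone\{u,v,A_1v,\ldots,A_kv\}$ (with $A_1,\ldots,A_k$ the matrices of negative determinant) and argues via the \emph{products} $A_iA_m$: these are diagonalizable Vandergraft matrices with positive determinant---the scalar-multiple hypothesis is what rules out non-diagonalizable products---so Theorem~\ref{th:pd} makes $\KK$ invariant under each $A_iA_m$, whence $A_i(A_mv)\in\KK$ and Remark~\ref{nov121} closes the loop. Your symmetric wedge $\Cone\{u+\epsilon v,u-\epsilon v\}$ bypasses products entirely: you verify Theorem~\ref{th:ne}'s ratio condition for each $A_j$ separately, using that $c_1/c_2\to -1$ lands in the interior of the interval when $\trace A_j>0$ and hits the endpoint exactly when $\trace A_j=0$ (which is why you must then pin $v$ to the common $u_2^{(0)}$). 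Your route is more elementary but less flexible in the choice of $v$; the paper's is slicker once one sees the product trick.

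For necessity when two diagonalizable $A_j,A_k$ have $\det<0$, $\trace=0$, the paper again uses products: $A_jA_k$ is a non-diagonalizable Vandergraft matrix, reducing to the already-handled clash between Corollaries~\ref{cor:pnd} and~\ref{cor:neg}. You instead exploit the rigidity of Theorem~\ref{th:ne} at $\trace=0$ directly, matching the two symmetric representations of $\KK$ to force $\alpha=0$. Both work; the paper's reduction is a one-liner, yours is a short coordinate computation.

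The remaining parts---sufficiency under (ii) via $\Cone\{u,u+\epsilon v\}$, and necessity when a non-diagonalizable matrix is present via Corollaries~\ref{cor:pnd} and~\ref{cor:neg}---coincide with the paper's argument.
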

\begin{proof} {\sl ``If" part.} (i) If all $A_j$ are diagonalizable and have non-negative
determinants, the result follows
from Theorem~\ref{th:pd}: any proper cone $\KK$ containing $u$ and
sufficiently narrow to avoid all the eigenvectors of $A_j$
corresponding to their second eigenvalue will do the job.

Suppose now that some of $A_j$ have negative determinants; relabel
them by $A_1,\ldots,A_k$. Consider
$\KK=\Cone\{u,v,A_1v,\ldots,A_kv\}$, where  $v$ is a vector different
from $u$ but so close to it that $\KK$ does not contain the
non-dominant eigenvectors of $A_{k+1},\ldots,A_n$ and $A_iA_j$
($i,j=1,\ldots,k$). The latter products are all Vandergraft matrices with
positive determinants and, under conditions imposed, also
diagonalizable (here, the hypothesis that all matrices $A_j$ with negative determinants and zero traces
are multiples of each other, is crucial). Hence,
$\KK$ is invariant under $A_{k+1},\ldots,A_n$
and $A_iA_j$ ($i,j=1,\ldots,k$) as in the previous part of the proof (see Theorem \ref{th:pd}). In
particular, $A_iA_1v,\ldots,A_iA_kv\in \KK$ for all $i=1,\ldots,k$. Since
$A_iv\in\KK$ ($i=1,2,\ldots ,k$) by the construction of $\KK$, in fact all the generators
of
$\KK$ are mapped by $A_1,\ldots,A_k$ into $\KK$, so that $\KK$ is
invariant also under $A_i$ for $i=1,\ldots,k$.

(ii) There is no need to consider the case when all $A_j$ are
diagonalizable, because it is covered by (i). Supposing that
non-diagonalizable matrices are present in $\mathcal A$, relabel them
by $A_1,\ldots,A_k$. Choose a vector $v$ positively associated with
$u$ relative to $A_1$; under the conditions imposed it will be
positively associated with $u$ also relative to $A_2,\ldots,A_k$. By
Theorem~\ref{th:pnd}, $\KK=\Cone\{u,v\}$ is $A_j$-invariant for
$j=1,\ldots,k$. Moving $v$ sufficiently close to $u$ in order to avoid
the non-dominant eigenvectors of $A_{k+1},\ldots,A_n$, we will make
$\KK$ invariant with respect to all $A_1,\ldots,A_n$.

{\sl ``Only if" part}. In cases different from (i)--(ii) the family $\mathcal A$
contains either\newline (iii) two linearly independent matrices with
negative determinants and zero traces, or (iv) two non-diagonalizable
matrices with different orientation, or (v) a non-diagonalizable matrix
and a matrix with negative determinant.

Denote the matrices involved in each case by $A_1$ and $A_2$. Then
in case (iii) $A_1A_2$ is a non-diagonalizable Vandergraft matrix, so
that (iii) reduces to (v).  In case (iv), due to the description given by
Theorem~\ref{th:pnd} the intersection of any $A_1$-invariant proper
cone with an $A_2$-invariant proper cone is a ray spanned by $u$,
and therefore not proper. In case (v), the non-existence of common
invariant proper cones follows from the comparison of
Corollaries~\ref{cor:pnd} and \ref{cor:neg}. \end{proof}
\begin{cor}\label{co:two}In the setting of Theorem~{\em \ref{th:comeg}}, an
$\mathcal A$-invariant proper cone exists if and only if any two
matrices in the family $\mathcal A$ share an invariant proper
cone.\end{cor}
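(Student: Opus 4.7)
The plan is as follows. The ``only if'' direction is immediate: if $\KK$ is an $\mathcal{A}$-invariant proper cone, then for any $i,j$ it is invariant under both $A_i$ and $A_j$, so $\{A_i,A_j\}$ admits a common invariant proper cone.

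For the non-trivial ``if'' direction, I will apply Theorem~\ref{th:comeg} to each pair $\{A_i,A_j\}$ and then show that the pair-wise conditions (i) and (ii) propagate to the whole family. The first reduction is: if some $A_{i_0}\in\mathcal{A}$ is non-diagonalizable, then for any other $A_j$ the pair $\{A_{i_0},A_j\}$ cannot satisfy condition (i) of Theorem~\ref{th:comeg} (which demands that every matrix in the pair be diagonalizable), hence it must satisfy (ii). This immediately forces $\det A_j\geq 0$ for all $j$, so no matrix in $\mathcal{A}$ has negative determinant. Moreover, for every other non-diagonalizable $A_j$, applying (ii) to $\{A_{i_0},A_j\}$ shows that $A_{i_0}$ and $A_j$ have the same orientation; by transitivity all non-diagonalizable matrices in $\mathcal{A}$ share the orientation of $A_{i_0}$. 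Thus $\mathcal{A}$ itself satisfies condition (ii), and Theorem~\ref{th:comeg} yields an $\mathcal{A}$-invariant proper cone.

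In the remaining case all matrices in $\mathcal{A}$ are diagonalizable. Then for any pair $A_i,A_j$ with $\det A_i<0=\trace A_i$ and $\det A_j<0=\trace A_j$, the condition (i) of Theorem~\ref{th:comeg} applied to $\{A_i,A_j\}$ forces $A_i$ and $A_j$ to be scalar multiples of each other. Hence all matrices of $\mathcal{A}$ with $\det<0$ and $\trace=0$ are (pairwise, and therefore jointly) scalar multiples of a single such matrix. So $\mathcal{A}$ satisfies condition (i) of Theorem~\ref{th:comeg}, and again a common invariant proper cone exists.

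The main (though mild) obstacle is organizing the case analysis so as to rule out a mismatch between ``pair types'' (i) and (ii); this is resolved by the observation that the presence of even a single non-diagonalizable matrix forces every pair to fall into case (ii), which is why no hybrid situation can arise. The rest is bookkeeping, relying entirely on the fact that both conditions in Theorem~\ref{th:comeg} are expressible as pairwise conditions on the elements of $\mathcal{A}$.
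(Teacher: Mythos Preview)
Your proof is correct and follows essentially the same idea as the paper's: both hinge on the observation that conditions (i) and (ii) of Theorem~\ref{th:comeg} are ``pairwise'' in nature, so their failure for the whole family is always witnessed by a pair in $\mathcal{A}$. The paper argues this via the contrapositive, pointing back to cases (iii)--(v) in the proof of Theorem~\ref{th:comeg} (each of which singles out a bad pair), whereas you argue directly by applying Theorem~\ref{th:comeg} to pairs and propagating the conclusions to the full family; the underlying content is the same.
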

\begin{proof}Indeed, from the consideration of cases (iii)--(v)  in the proof of Theorem~\ref{th:comeg}
it follows that there exists  a pair of matrices in  $\mathcal A$ with no
common invariant proper cone, whenever conditions (i) or (ii) do not
hold. \end{proof}

We now move to the situation when $\mathcal A$ contains matrices
with different dominant eigenlines. As it happens, the crucial role is
then played by an extended family ${\mathcal A}_1$ which contains
$\mathcal A$ and all pairwise products (different from scalar multiples
of the identity) of the matrices in $\mathcal A$ having negative
determinants:
\[ {\mathcal A}_1= {\mathcal A}\cup\{ A_iA_j\colon A_i,A_j\in{\mathcal
A},\ \det A_i,\det A_j<0\text{ and } A_iA_j\neq cI \}.\] Of course, ${\mathcal A}_1$ coincides with
$\mathcal A$ if the latter consists only of matrices with non-negative
determinants.
\begin{thm}\label{th:necond}Let $\mathcal A$ be a finite family in $\R^{2\times 2}$.
For  an $\mathcal A$-invariant proper cone to exist it is necessary that
\begin{itemize}\item[{\em (i)}] all elements of ${\mathcal A}_1$ are Vandergraft matrices,
\item[{\em (ii)}]there are at most two dominant eigenlines
    corresponding to \newline  non-diagonalizable matrices in ${\mathcal
    A}_1$, and all of them (if there is more than one) corresponding to
    the  same dominant eigenline also have the same orientation, \end{itemize}
    and
\begin{itemize}\item[{\em (iii)}]  the dominant eigenlines of matrices in ${\mathcal A}_1$
are separated from the non-dominant ones.\end{itemize}
    \end{thm}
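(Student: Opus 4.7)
The plan is to assume a common invariant proper cone $\KK$ for $\mathcal{A}$ exists and then derive the three necessary conditions in turn, drawing on Vandergraft's Theorem~\ref{th:V} and on the single-matrix results of Section~\ref{2by2}. Condition (i) comes almost for free: the collection of matrices that leave $\KK$ invariant is closed under multiplication, since $AB\KK\subseteq A\KK\subseteq\KK$, so $\KK$ is automatically invariant under every element of $\mathcal{A}_1$; Theorem~\ref{th:V} then forces each such element to be a Vandergraft matrix.

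For condition (ii), the key input is Corollary~\ref{cor:pnd}: every non-diagonalizable Vandergraft matrix in $\mathcal{A}_1$ must have its dominant eigenvector on the boundary of $\KK$. Since that boundary in $\R^2$ consists of only two rays, and each non-diagonalizable matrix has a single eigenline, at most two distinct dominant eigenlines can arise this way. For the orientation statement, I would take two non-diagonalizable $A,B\in\mathcal{A}_1$ sharing a dominant eigenline $L$, pick a common dominant eigenvector $u$ in $L\cap\KK$ (which is a single ray, by pointedness), and write $\KK=\Cone\{u,w\}$. The ``only if'' part of Theorem~\ref{th:pnd}, applied to $A$ and to $B$ separately, forces $w$ to be positively associated with $u$ relative to each of them. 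The two ``positive-association'' sets are open half-planes of $\R^2$ bounded by the same line $L$, and both contain $w$, so they must coincide; by definition this is the ``same orientation'' condition.

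Condition (iii) is then essentially a compilation of earlier single-matrix results applied to each $A\in\mathcal{A}_1$ in turn: Theorem~\ref{th:V}(iii) places a dominant eigenvector of $A$ inside $\KK$, so the dominant eigenray lies in $\KK$; Theorem~\ref{th:pd} keeps the non-dominant eigenline out of the interior of $\KK$ when $A$ is diagonalizable with distinct nonnegative eigenvalues; and pointedness, in the manner used in Theorem~\ref{th:ne}, keeps it out of $\KK$ altogether when $\det A<0$. Thus $\KK$ itself witnesses the separation asserted in (iii). The only step of real substance is the orientation part of (ii): the subtlety is that the single boundary generator $w$ of $\KK$ is forced into both of the open half-planes attached to $A$ and to $B$, collapsing them into one. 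Parts (i) and (iii) are essentially bookkeeping from results already in hand.
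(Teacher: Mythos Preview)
Your proposal is correct and follows essentially the same route as the paper's own proof: invariance under products gives (i) via Theorem~\ref{th:V}; Corollary~\ref{cor:pnd} plus Theorem~\ref{th:pnd} yield both parts of (ii); and for (iii) the cone $\KK$ itself serves as the separating cone, with dominant eigenlines in $\KK\cup(-\KK)$ and non-dominant ones in the closure of the complement (the complement proper when $\det A<0$). Your orientation argument for (ii), selecting the second generator $w$ of $\KK$ and observing it lies in both positive half-planes, is just a direct unpacking of the paper's remark that differently oriented matrices with the same eigenline can share only an eigenray.
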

The {\em separation} condition  (iii)  means simply the existence of
vectors $v_1,v_2$ such that the interior of $\Cone\{v_1,v_2\}$ is free of
the non-dominant eigenvectors of matrices in ${\mathcal A}_1$  while
the interior of $\Cone\{v_1,-v_2\}$ is free of the dominant eigenvectors
of non-scalar matrices. The vectors $v_j$ themselves are allowed to
be both dominant and non-dominant, but only if as the latter they
correspond to matrices in ${\mathcal A}_1$  with non-negative
determinants. 

\begin{proof}An $\mathcal A$-invariant cone $\KK$ also is ${\mathcal A}_1$-invariant. This
immediately implies the necessity of condition (i).

According to Corollary~\ref{cor:pnd}, an eigenline of a
non-diagonalizable $2\times 2$ Vandergraft matrix must be passing
through the boundary of any of its invariant proper cones. Thus, at
most two such eigenlines are admissible.

If two non-diagonalizable matrices share the eigenline but have
different orientation, the intersection of (any pair of) the respective
invariant cones is an eigenray, due to Theorem~\ref{th:pnd}, and
therefore is not proper. These two observations settle the necessity of
part (ii).

Finally, if $\KK$ is an $\mathcal A$- (and therefore ${\mathcal
A}_1$)-invariant proper cone, then all dominant eigenlines lie in
$\KK\cup(-\KK)$ while non-dominant eigenlines belong to the closure
of the complement (to the complement itself, if the respective matrix
has negative determinant). Thus, (iii) holds.
\end{proof}

Suppose now that necessary conditions stated in
Theorem~\ref{th:necond} hold. Denote by $U=\{u_1,\ldots,u_N\}$ the
set of all distinct dominant unit eigenvectors of matrices in ${\mathcal
A}_1$ the directions of which are chosen in such a way that $\Cone
U$ is proper and its interior is free of non-dominant eigenlines (this is
possible due to (iii)). If there are no such eigenlines (that is, all
matrices in ${\mathcal A}$ are non-diagonalizable), impose instead
the condition that $u_j$ for $j=2,\ldots,N$ are positively associated
with $u_1$ relative to the matrix $A_1$ for which $u_1$ is an
eigenvector (this is possible due to (ii)).  This choice is unique up to
changing the sign of all $u_j$ simultaneously. Relabel also the
elements of $\mathcal A$ in such a way that $\det A_i$ is negative for
$i=1,\ldots, k$ and non-negative otherwise (with the convention that
$k=0$  if $\det A_i\geq 0$ for all $i=1,\ldots,n$).

For further consideration it is convenient to distinguish between the
cases when there is none, one, or two dominant eigenlines
corresponding to non-diagonalizable matrices in ${\mathcal A}_1$.

\begin{thm}\label{th:none}Let ${\mathcal A}=\{A_1,\ldots,A_n\}\subset \R^{2\times 2}$
be such that all the elements of ${\mathcal A}_1$ are diagonalizable
matrices. Under the necessary conditions\footnote{Condition (ii)
holds automatically.} {\em (i), (iii)} of Theorem $\ref{th:necond}$ and
using the notation introduced above, let \eq{bigcone} \KK=\Cone\{u_j,
A_iu_j\colon i=1,\ldots, k;\ j=1,\ldots,N\}.\en  Then there exist
$\mathcal A$-invariant proper cones if and only if the cone $\KK$ is
proper,  its
interior is free of the non-dominant eigenvectors of all matrices in
${\mathcal A}_1$, and the edges of $\KK$ are not collinear with the
eigenvectors of $A_i$ ($i=1,\ldots,k$).
\end{thm}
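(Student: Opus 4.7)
The plan is to prove both implications directly. For necessity, suppose an $\mathcal A$-invariant proper cone $\tilde\KK$ exists. The first step is to establish $\KK\subseteq\tilde\KK$: every matrix in $\mathcal A_1$ (including the pairwise products $A_iA_j$ with $i,j\leq k$, since $\tilde\KK$ is invariant under both factors) leaves $\tilde\KK$ invariant, so Theorem \ref{th:V}(iii) places a dominant eigenvector of each element of $\mathcal A_1$ inside $\tilde\KK$. After replacing $\tilde\KK$ with $-\tilde\KK$ if necessary, pointedness aligns these signs with the chosen orientation of the $u_j$'s, giving $u_j\in\tilde\KK$ for every $j$; then $A_iu_j\in\tilde\KK$ by $A_i$-invariance, so every generator of $\KK$ lies in $\tilde\KK$. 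From this containment $\KK$ inherits pointedness; it is also polyhedral (hence closed), convex by construction, and solid because $N\geq 2$ makes $\Cone\{u_1,u_2\}$ two-dimensional, so $\KK$ is proper. If a non-dominant eigenvector $v$ of some $B\in\mathcal A_1$ lay in the interior of $\KK$, a full neighborhood of $v$ would sit in $\tilde\KK$, contradicting Theorem \ref{th:ne} when $\det B<0$ and Theorem \ref{th:pd} (via $v\in\operatorname{int}\tilde\KK$) when $\det B\geq 0$. For the edge condition, non-dominant eigenvectors of $A_i$ ($i\leq k$) are absent from $\tilde\KK$ by Theorem \ref{th:ne}, hence from the edges of $\KK$; and each dominant eigenvector of such an $A_i$ is some $u_j$, which I would show lies in $\operatorname{int}\KK$ by choosing any other $u_{j'}$ (available because $N\geq 2$; Theorem \ref{th:ne} prevents $u_{j'}$ from being proportional to the non-dominant eigenvector of $A_i$, since then $\pm u_{j'}$ would be such an eigenvector in $\tilde\KK$) and observing that $u_{j'}$ and $A_iu_{j'}$ carry components along the non-dominant eigenvector of $A_i$ of opposite signs, so they straddle the line through $u_j$.

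For sufficiency, Remark \ref{nov121} reduces $\mathcal A$-invariance of $\KK$ to checking $A_\ell g\in\KK$ for every $A_\ell\in\mathcal A$ and every generator $g$ of $\KK$. The crux is that for any $B\in\mathcal A_1$ with $\det B\geq 0$ the cone $\KK$ contains a dominant eigenvector of $B$ (some $u_j$) and, by the interior hypothesis, has its interior disjoint from the non-dominant eigenline of $B$, so Theorem \ref{th:pd} yields $B$-invariance of $\KK$. This handles all $A_\ell$ with $\ell>k$. For $A_\ell=A_i$ with $i\leq k$, the generator $u_j$ maps to the generator $A_iu_j$, and the generator $A_{i'}u_j$ maps to $A_iA_{i'}u_j$, handled by three sub-cases: if $i=i'$, the Cayley-Hamilton identity writes it as $(\trace A_i)(A_iu_j)+(-\det A_i)u_j$, a non-negative combination of two generators (using $\trace A_i\geq 0$ and $-\det A_i>0$); if $A_iA_{i'}=cI$, a short spectral argument using conditions (i) and (iii) forces $c\geq 0$, so $A_iA_{i'}u_j=cu_j\in\KK$; otherwise $A_iA_{i'}\in\mathcal A_1$ has positive determinant and the $B$-invariance clause above applies.

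The main obstacle, I expect, is the edge condition in the necessary direction: upgrading $u_j\in\operatorname{int}\tilde\KK$ (guaranteed by Corollary \ref{cor:neg}) to $u_j\in\operatorname{int}\KK$. The corollary alone is insufficient, since $\KK$ could in principle be narrower than $\tilde\KK$ around $u_j$; one must exploit both $N\geq 2$ and the sign reversal under $A_i$ (from its negative non-dominant eigenvalue) to manufacture a pair of generators $u_{j'}$ and $A_iu_{j'}$ of $\KK$ that straddle the line through $u_j$.
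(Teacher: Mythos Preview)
Your proof is correct and follows essentially the same approach as the paper: for necessity, show $\KK\subseteq\tilde\KK$ (via Theorem~\ref{th:V}(iii) and invariance) and read off the three conditions from Theorems~\ref{th:pd} and \ref{th:ne}; for sufficiency, verify $\mathcal A$-invariance of $\KK$ generator by generator, using Theorem~\ref{th:pd} on the products $A_iA_{i'}$ (which have positive determinant) to handle the images $A_i(A_{i'}u_j)$. You supply considerably more detail than the paper's one-line ``the rest follows from Theorems~\ref{th:pd} and \ref{th:ne}''---in particular the straddling argument showing the dominant eigenvector of each $A_i$ lies in $\operatorname{int}\KK$ (not merely $\operatorname{int}\tilde\KK$), and the explicit disposal of the sub-cases $i=i'$ (Cayley--Hamilton, essentially Lemma~\ref{l:vAv}) and $A_iA_{i'}=cI$ (where condition (iii) forces $c\geq 0$), both of which the paper passes over silently.
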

\begin{proof}{\sl ``Only if" part}. Any $\mathcal A$-invariant cone also is
${\mathcal A}_1$-invariant, and thus must contain either $U$ or $-U$.
Without loss of generality, let it contain $U$. Then, being invariant
under all $A_i$, it must also contain $\KK$. The rest follows from
Theorems~\ref{th:pd} and \ref{th:ne}, applied to each of the matrices
in ${\mathcal A}_1$.

{\sl ``If" part}. For $i=k+1,\ldots,n$, the cone $\KK$ contains the
dominant eigenvector of $A_i$ (since it is one of the $u_j$'s) and the interior of
$\KK$ does not contain its non-dominant eigenvectors. By
Theorem~\ref{th:pd}, $\KK$ is invariant under $A_i$.

Since $\det A_i A_m>0$ for all $i,m=1,\ldots,k$, the cone $\KK$ for the
same reasons is $A_iA_m$-invariant. Consequently,
$A_iA_mu_j\in\KK$ for all $i,m=1,\ldots,k$; $j=1,\ldots,N$. But $A_iu_j$
lies in $\KK$ by construction. So, all the generators of $\KK$ are
mapped into $\KK$ by $A_1,\ldots,A_n$. It remains to invoke
Remark~\ref{nov121}. \end{proof}

Theorem~\ref{th:none} shows that necessary conditions stated in
Theorem~\ref{th:necond} in general are not sufficient. For $k=0$,
however, $\KK$  coincides with $\Cone U$, and the latter is $\mathcal
A$-invariant already under the conditions of Theorem~\ref{th:necond}.
The situation therefore simplifies as follows.
\begin{cor}\label{co:4diagnonneg}Let ${\mathcal A}=\{A_1,\ldots,A_n\}$ be a family of diagonalizable
$2\times 2$  matrices with non-negative determinants. Then in order
for an $\mathcal A$-invariant proper cone to exist it is necessary and
sufficient that \begin{itemize}\item[{\em (i)}] all elements of $\mathcal
A$ are Vandergraft matrices, and \item[{\em (ii)}] the dominant
eigenlines of matrices in $\mathcal A$ are separated from the
non-dominant ones.\end{itemize}
\end{cor}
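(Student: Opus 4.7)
The plan is to derive this corollary essentially as a specialization of Theorems \ref{th:necond} and \ref{th:none} to the case where every $A_j$ is diagonalizable and has $\det A_j \geq 0$. The first observation to record is that under these hypotheses the extended family satisfies ${\mathcal A}_1 = \mathcal A$, since ${\mathcal A}_1$ adjoins only products $A_iA_j$ with $\det A_i, \det A_j < 0$, and no such matrices are present. Consequently condition (ii) of Theorem \ref{th:necond} (the one concerning non-diagonalizable dominant eigenlines) is vacuously satisfied, and condition (iii) of Theorem \ref{th:necond} reduces precisely to the separation condition (ii) of the corollary.

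For the necessity direction I would simply invoke Theorem \ref{th:necond}: any $\mathcal{A}$-invariant proper cone forces each $A_j$ to admit an invariant proper cone, hence to be Vandergraft (this is condition (i)), and forces the separation of dominant from non-dominant eigenlines (this is condition (ii), after the reduction just noted).

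For sufficiency, I would appeal to Theorem \ref{th:none} with $k=0$, since $\mathcal A$ contains no matrices with negative determinants. In that case the cone \eqref{bigcone} collapses to $\KK = \Cone U$, where $U = \{u_1,\ldots,u_N\}$ is the set of dominant unit eigenvectors chosen with signs so that $\Cone U$ is proper and its interior avoids all non-dominant eigenlines. The separation hypothesis of the corollary is exactly what allows this choice of $U$ and guarantees that $\KK$ is proper with interior free of non-dominant eigenvectors; moreover, with $k=0$, the conditions in Theorem \ref{th:none} about edges of $\KK$ not being collinear with eigenvectors of $A_i$, $i=1,\ldots,k$, are vacuous. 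Then for each $A_j \in \mathcal A$, the cone $\Cone U$ contains a dominant eigenvector of $A_j$ (namely $u_j$ or $-u_j$) and its interior misses the non-dominant eigenline of $A_j$, so Theorem \ref{th:pd} gives $A_j$-invariance of $\Cone U$ directly.

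I do not anticipate any real obstacle: the only subtlety is checking that the general ``choice of signs'' setup preceding Theorem \ref{th:none} is available here purely from the separation condition, and that the vacuity of the conditions involving matrices of negative determinant makes Theorem \ref{th:none} apply without residual hypotheses. Both are immediate once one notes that $k = 0$ forces ${\mathcal A}_1 = \mathcal A$ and trivializes the non-diagonalizable clause of Theorem \ref{th:necond}(ii).
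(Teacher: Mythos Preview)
Your proposal is correct and follows essentially the same route as the paper: the corollary is deduced from Theorems~\ref{th:necond} and \ref{th:none} by noting that with all determinants non-negative one has ${\mathcal A}_1=\mathcal A$ and $k=0$, so the cone \eqref{bigcone} reduces to $\Cone U$ and the remaining conditions of Theorem~\ref{th:none} become vacuous. The only minor slip is notational: the dominant eigenvector of $A_j$ contained in $\Cone U$ is some $u_i$ from $U$ (with $i\in\{1,\ldots,N\}$), not necessarily the vector indexed $u_j$, since distinct matrices may share a dominant eigenline.
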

We can now observe the following. 

\begin{thm}\label{co:4}Let ${\mathcal A}=\{A_1,\ldots,A_n\}$ be a family of diagonalizable
$2\times 2$ Vandergraft matrices with non-negative determinants. If
any four of them (three --- if there is at most one pair of simultaneously
diagonalizable matrices in $\mathcal A$) have a common invariant
proper cone, then there also exists an $\mathcal A$-invariant proper
cone. \end{thm}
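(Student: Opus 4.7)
The plan is to translate the statement into the geometric separation criterion of Corollary~\ref{co:4diagnonneg}. Since every matrix in $\mathcal A$ is diagonalizable, Vandergraft, and has non-negative determinant, each $A_i$ contributes one dominant eigenline $\theta_i$ and one non-dominant eigenline $\phi_i$ as a point of $\mathbb{R}P^1$, and the existence of a common invariant proper cone for any subfamily is equivalent to the $\theta$'s and $\phi$'s of that subfamily being separable on $\mathbb{R}P^1$ by a chord (the non-negative determinant hypothesis makes boundary overlaps harmless). I will then use the elementary circular-separation fact: two finite point sets on a circle fail to be chord-separable if and only if they contain a cyclically alternating quadruple $\theta,\phi',\theta',\phi$.

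For the ``four'' version, suppose $\mathcal A$ admits no common invariant proper cone. Then some cyclically alternating quadruple $\theta_a,\phi_b,\theta_c,\phi_d$ appears among the eigenlines of $\mathcal A$. A single matrix contributes at most two of these four distinct points (since distinctness forces $a\ne c$ and $b\ne d$), so at most four matrices $A_a,A_b,A_c,A_d$ are involved; this subfamily still fails separation, contradicting the hypothesis via Corollary~\ref{co:4diagnonneg}. For the ``three'' version the same argument works unless all four indices are distinct. In that case I examine the position of $\phi_a$ on $\mathbb{R}P^1$: since $\phi_a\ne\theta_a$, it lies on one of the two open arcs determined by $\theta_a$ and $\theta_c$, or it coincides with one of $\phi_b,\theta_c,\phi_d$. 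If $\phi_a$ lies on the arc through $\phi_b$ (or equals $\phi_b$), then $\theta_a,\phi_a,\theta_c,\phi_d$ is a cyclically alternating quadruple involving only $\{A_a,A_c,A_d\}$; if it lies on the arc through $\phi_d$ (or equals $\phi_d$), then $\theta_a,\phi_b,\theta_c,\phi_a$ involves only $\{A_a,A_b,A_c\}$; in either case a three-element subfamily fails separation. The remaining possibility is $\phi_a=\theta_c$, and the symmetric analysis applied to $\phi_c$ reduces us further to $\phi_a=\theta_c$ and $\phi_c=\theta_a$, in which case $A_a$ and $A_c$ share both eigenlines and are simultaneously diagonalizable.

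In this last residual configuration, the extra hypothesis forces $\{A_a,A_c\}$ to be the unique simultaneously diagonalizable pair in $\mathcal A$. Normalizing so that the shared eigenlines sit at positions $0$ and $2$ on a circle of length $4$ with $\theta_a=\phi_c=0$, $\theta_c=\phi_a=2$, $\phi_b=1$, $\phi_d=3$, the remaining freedom lies in the positions of $\theta_b$ and $\theta_d$. I then check each combination of which of the open arcs $(0,2)$ and $(2,0)$ contains $\theta_b$ and $\theta_d$: in every case one of $\{A_a,A_b,A_c\}$, $\{A_a,A_c,A_d\}$, or $\{A_b,A_c,A_d\}$ contains a cyclically alternating quadruple of eigenlines, giving a three-element subfamily without a common invariant proper cone and contradicting the hypothesis. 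The main obstacle is this final case analysis: ensuring that no placement of $\theta_b,\theta_d$ compatible with the original interleaving $\theta_a,\phi_b,\theta_c,\phi_d$ escapes the conclusion, so that the reduction from a $4$-matrix obstruction to a $3$-matrix one is always available under the extra simultaneously-diagonalizable restriction.
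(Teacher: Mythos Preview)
Your overall approach---contrapositive, reduction to the separation criterion of Corollary~\ref{co:4diagnonneg}, and extraction of a cyclically alternating quadruple $\theta_a,\phi_b,\theta_c,\phi_d$---is exactly the paper's, and your treatment of the ``four'' part and of the first reduction (looking at $\phi_a$ and $\phi_c$) in the ``three'' part matches it as well.

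The gap is in your handling of the residual case. You assert that for every placement of $\theta_b$ and $\theta_d$ in the open arcs $(0,2)$ and $(2,0)$, one of the three triples $\{A_a,A_b,A_c\}$, $\{A_a,A_c,A_d\}$, $\{A_b,A_c,A_d\}$ fails separation. This is false: take $\theta_b\in(2,3)$ and $\theta_d\in(1,2)$ (say $\theta_b=2.5$, $\theta_d=1.5$). Then, with $\theta_a=\phi_c=0$, $\phi_b=1$, $\theta_c=\phi_a=2$, $\phi_d=3$, each of your three listed triples \emph{is} chord-separable (for $\{A_a,A_b,A_c\}$ and $\{A_a,A_c,A_d\}$ the chord through $0$ and $2$ works; for $\{A_b,A_c,A_d\}$ the chord through $1$ and $3$ works). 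The triple that actually fails is the one you omitted, $\{A_a,A_b,A_d\}$: its eigenlines sit at $0(\theta_a),1(\phi_b),1.5(\theta_d),2(\phi_a),2.5(\theta_b),3(\phi_d)$, a fully alternating pattern. Your case split by the coarse arcs $(0,2)$ versus $(2,0)$ also ignores the boundary values $\theta_b,\theta_d\in\{0,2\}$, but that is secondary to the missing triple.

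The paper avoids this case analysis entirely by a symmetry argument: having shown that the reduction works unless $\{A_a,A_c\}$ is simultaneously diagonalizable, it simply repeats the \emph{same} reasoning with the roles of the two pairs swapped. If $\{A_b,A_d\}$ is not simultaneously diagonalizable, then $\theta_b\neq\phi_d$ or $\theta_d\neq\phi_b$; placing, say, $\theta_d$ in the appropriate arc between $\phi_b$ and $\phi_d$ lets one replace $\theta_a$ or $\theta_c$ by $\theta_d$ in the alternating quadruple, yielding a three-matrix obstruction (in the counterexample above this produces exactly $\{A_a,A_b,A_d\}$). Since the ``at most one pair'' hypothesis forbids $\{A_a,A_c\}$ and $\{A_b,A_d\}$ from both being simultaneously diagonalizable (when the four matrices are distinct), one of the two reductions always succeeds. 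Replacing your final paragraph by this symmetric step closes the gap cleanly.
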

\begin{proof}Indeed, if an $\mathcal A$-invariant proper cone does not exist, then condition (iii)
of Theorem~\ref{th:necond} fails. But then it is possible to find four
matrices in $\mathcal A$ (without loss of generality relabel them by
$A_1,\ldots,A_4$) such that, when traveling around the origin in a
counterclockwise direction, one encounters consequently the
dominant eigenline of $A_1$, the non-dominant eigenline of $A_2$,
the dominant eigenline of $A_3$, and finally the non-dominant
eigenline of $A_4$. Condition (iii) fails for the set
$\{A_1,A_2,A_3,A_4\}$, so that these four matrices already do not have
a common invariant proper cone. Of course, it is not excluded that
$A_1$ or $A_3$ coincides with $A_2$ or $A_4$, and then we have an
even smaller subfamily of $\mathcal A$ with no common invariant
proper cone. If $A_1$ and $A_3$ are not simultaneously
diagonalizable, the non-dominant eigenline of at least one of them will
be different from the dominant eigenline of the other. Consequently, in
this case we can always choose $A_2$ or $A_4$ coinciding with
$A_1$ or $A_3$.  A similar reasoning works if a pair $A_2, A_4$ is not
simultaneously diagonalizable.\end{proof} 

We now move to the case of one dominant eigenline corresponding to
non-diagonalizable matrices. 

\begin{thm}\label{th:one}Let ${\mathcal A}=\{A_1,\ldots,A_n\}\subset \R^{2\times 2}$
satisfy conditions {\em (i)--(iii)}  of Theorem $\ref{th:necond}$, with exactly one
dominant direction (say, corresponding to $u_1$) shared by all
non-diagonalizable matrices in ${\mathcal A}_1$. Then there exists an
$\mathcal A$-invariant proper cone if and only if the cone {\em
(\ref{bigcone})} satisfies conditions of Theorem $\ref{th:none}$, and in
addition its interior consists of vectors positively associated with
$u_1$.
\end{thm}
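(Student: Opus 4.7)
The plan is to mirror the proof of Theorem~\ref{th:none} and splice in Theorem~\ref{th:pnd} to handle the non-diagonalizable matrices that now appear in $\mathcal{A}_1$.

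For the ``only if'' direction, let $\KK'$ be any $\mathcal{A}$-invariant proper cone; it is automatically $\mathcal{A}_1$-invariant, so by Theorem~\ref{th:V}(iii) it contains a dominant eigenvector of each matrix in $\mathcal{A}_1$. After possibly replacing $\KK'$ by $-\KK'$ (equivalently, switching the signs of all $u_j$) I may arrange that $U\subset\KK'$, and the $A_i$-invariance of $\KK'$ for $i=1,\ldots,k$ then forces $\KK'\supseteq\KK$. Proceeding exactly as in the ``only if'' portion of Theorem~\ref{th:none}, the containment $\KK\subseteq\KK'$ together with Theorems~\ref{th:pd} and~\ref{th:ne} applied to the diagonalizable elements of $\mathcal{A}_1$ delivers the three properties catalogued in Theorem~\ref{th:none}. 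The new ingredient is that $\KK'$ must also be invariant under every non-diagonalizable $B\in\mathcal{A}_1$. Theorem~\ref{th:pnd} then writes $\KK'=\Cone\{u_1,w\}$ with $w$ positively associated with $u_1$ relative to $B$, and condition~(ii) of Theorem~\ref{th:necond} guarantees that the open half-plane of such $w$'s is the same for every non-diagonalizable $B\in\mathcal{A}_1$. The interior of $\KK'$ therefore lies in this common half-plane, and so does the interior of the sub-cone $\KK$.

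For the ``if'' direction, the conditions imported from Theorem~\ref{th:none} suffice, by the argument given there, to yield $\KK$-invariance of every $A_i$ that is either diagonalizable with non-negative determinant or lies in $\{A_1,\ldots,A_k\}$: the vectors $A_iu_j$ lie in $\KK$ by construction, and $A_iA_mu_j\in\KK$ follows from $(A_iA_m)$-invariance of $\KK$. Whenever a product $A_iA_m$ happens to be non-diagonalizable, the standing hypothesis forces its dominant eigenline to be along $u_1$ and its orientation to agree with the common one, so $\KK$-invariance of such a product is delivered by Theorem~\ref{th:pnd} from the condition that the interior of $\KK$ consists of vectors positively associated with $u_1$. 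The same condition places $u_1$ on the boundary of $\KK$ (the eigenline separates the two half-planes associated to any non-diagonalizable matrix in $\mathcal{A}_1$), so $u_1$ is an edge of $\KK$, and the second edge, being linearly independent of $u_1$, must lie strictly on the positive-association side. Theorem~\ref{th:pnd} then delivers $\KK$-invariance of every non-diagonalizable $A_i\in\mathcal{A}$. All generators of $\KK$ are thus mapped into $\KK$ by every $A_i$, and Remark~\ref{nov121} concludes.

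The main obstacle will be verifying that the single extra condition on ``positive association with $u_1$'' really captures everything contributed by the non-diagonalizable members of $\mathcal{A}_1$. Three points need care: (a) showing that $u_1$ is forced to be an edge of $\KK$ rather than an interior ray, which follows from the fact that the eigenline lies outside the open positively-associated half-plane; (b) handling products $A_iA_m$ with $i,m\leq k$ that happen to be non-diagonalizable, since in Theorem~\ref{th:none} all such products were automatically diagonalizable and the bookkeeping there does not cover them; and (c) confirming that ``positively associated with $u_1$'' is well-defined across all non-diagonalizable elements of $\mathcal{A}_1$, which is exactly the content of the orientation part of Theorem~\ref{th:necond}(ii). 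Once these are in place, the pieces already in the paper assemble cleanly.
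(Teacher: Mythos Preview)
Your proof is correct and follows essentially the same approach as the paper's: both directions mirror the proof of Theorem~\ref{th:none}, with Theorem~\ref{th:pnd} invoked to handle the non-diagonalizable members of $\mathcal{A}_1$. The paper's own argument is extremely terse (the entire ``if'' part reads ``As in Theorem~\ref{th:none}, the cone (\ref{bigcone}) itself does the job''), so your write-up is considerably more explicit---in particular, your points (a)--(c) spell out precisely the places where the non-diagonalizable case requires care, and your treatment of possibly non-diagonalizable products $A_iA_m$ fills a detail the paper leaves to the reader.
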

\begin{proof}{\sl ``Only if" part}. An $\mathcal A$-invariant cone is invariant under
all diagonalizable matrices in ${\mathcal A}_1$. Therefore, it must
contain the cone (\ref{bigcone}). On the other hand, it is also invariant
under all non-diagonalizable matrices in ${\mathcal A}_1$, so that by
Theorem~\ref{th:pnd} $u_1$ must lie on its boundary, and the interior
of the cone consists only of vectors positively associated with $u_1$.
The same is therefore true for $\KK$.

{\sl ``If" part}. As in Theorem~\ref{th:none}, the cone (\ref{bigcone})
itself does the job. \end{proof}

The case of two dominant eigenlines corresponding to
non-diagonalizable matrices in ${\mathcal A}_1$ can be treated along
the same lines. However, a more straightforward (and less
computationally consuming) approach also is available.

Suppose that conditions (i), (ii), and (iii) of Theorem~\ref{th:necond}
hold, and that ${\mathcal A}_1$ contains  two non-diagonalizable
matrices (say, $B_1$ and $B_2$) with non-collinear dominant
eigenvectors. Relabel the latter as $u_1$ and $u_2$, choosing the
direction of $u_1$ arbitrarily, and the direction of $u_2$ in such a way
that it is positively associated with $u_1$  relative to $B_1$. According
to Theorem~\ref{th:pnd}, then either $B_1$ and $B_2$ have no
common invariant proper cones (if $u_1$ is negatively associated
with $u_2$ relative to $B_2$), or there are exactly two such cones:
$\KK=\Cone\{u_1,u_2\}$ and $-\KK$.

\begin{thm}\label{th:two}For a finite  family ${\mathcal A}=\{A_1,\ldots,A_n\}$
of Vandergraft matrices with exactly two dominant eigenlines corresponding to
non-diagonalizable matrices in ${\mathcal A}_1$, the only possible
$\mathcal A$-invariant proper cones are $\pm\KK$ introduced above.
These cones are indeed $\mathcal A$-invariant if and only if:
\begin{itemize}\item[{\em (a)}] all non-diagonalizable matrices in ${\mathcal A}$ (if any)
with a dominant eigenvector $u_j$ have the same orientation as
$B_j$ ($j=1,2$), \item[{\em (b)}] for all matrices $A_j\in{\mathcal A}$,
their dominant eigenvectors  lie in $\KK\cup(-\KK)$ while the
non-dominant ones lie outside the interior of $\KK\cup(-\KK)$,
\item[{\em (c)}] for $A=A_j\in{\mathcal A}$ with the eigenvalues
    $\lambda_{1j}>0$, $\lambda_{2j}<0$ and the dominant
    eigenvector $u_1+\xi u_2\in\KK$, the non-dominant eigenvector
    must be collinear with $u_1+\eta u_2$, where \[
    \frac{\lambda_{1j}}{\lambda_{2j}}\leq \frac{\eta}{\xi}\leq
    \frac{\lambda_{2j}}{\lambda_{1j}}. \]
\end{itemize} \end{thm}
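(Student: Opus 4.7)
The plan is to reduce the theorem to the single-matrix results of Section~\ref{2by2}, applied to each $A_j\in\mathcal A$ individually. The first claim, that $\pm\KK$ are the only candidates, is already secured by the paragraph immediately preceding the statement: an $\mathcal A$-invariant proper cone is, a fortiori, invariant under $B_1$ and $B_2$, and Corollary~\ref{cor:pnd} together with Theorem~\ref{th:pnd} forces both $u_1$ and $u_2$ to be generators of the cone; our chosen orientation for $u_2$ then pins the cone down to $\pm\KK$.

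For the \emph{only if} direction, assume $\KK$ is $\mathcal A$-invariant (the case of $-\KK$ is symmetric) and verify each condition in turn. Condition~(a) is immediate from Theorem~\ref{th:pnd}: if $A\in\mathcal A$ is non-diagonalizable with dominant eigenvector $u_j$, its invariant proper cones have the form $\Cone\{u_j,w\}$ with $w$ positively associated with $u_j$ relative to $A$; our $\KK$ supplies $w=u_{3-j}$, which is also positively associated with $u_j$ relative to $B_j$, so $A$ and $B_j$ share orientation at $u_j$. Condition~(b) comes uniformly from Theorem~\ref{th:V}(iii) for the dominant eigenvector and from the three explicit descriptions in Theorems~\ref{th:pd}, \ref{th:pnd} and~\ref{th:ne} for the non-dominant one. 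For condition~(c), fix $A_j$ with $\det A_j<0$, write its dominant and non-dominant eigenvectors as $e_1=u_1+\xi u_2$ and $e_2=u_1+\eta u_2$, and invert this change of basis to express $u_1$ and $u_2$ as positive multiples of $e_1+c_1e_2$ and $e_1+c_2e_2$ with $c_1/c_2=\xi/\eta$. Theorem~\ref{th:ne} then demands $\lambda_{1j}/\lambda_{2j}\leq\xi/\eta\leq\lambda_{2j}/\lambda_{1j}$, and since $(\lambda_{1j}/\lambda_{2j})(\lambda_{2j}/\lambda_{1j})=1$ with both factors negative, this interval is preserved under reciprocation and coincides with the inequality in~(c).

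For the \emph{if} direction, I check $A_j$-invariance of $\KK$ matrix by matrix. For non-diagonalizable $A_j$, condition~(a) places the second generator of $\KK$ on the positively-associated side of the dominant eigenvector, matching Theorem~\ref{th:pnd} exactly. For diagonalizable $A_j$ with $\det A_j\geq 0$, condition~(b) feeds directly into Theorem~\ref{th:pd}. For $A_j$ with $\det A_j<0$, reversing the change of basis from the previous paragraph converts condition~(c) into the inequality required by Theorem~\ref{th:ne}, so $\KK$ is $A_j$-invariant. The main obstacle is the sign bookkeeping in case~(c): one must use pointedness of $\KK$ together with condition~(b) to pin down $\xi\geq 0$ and $\eta\leq 0$ before the ratio $c_1/c_2$ is even meaningful, and separately handle the degenerate subcases in which an eigenvector of $A_j$ happens to be collinear with $u_1$ or $u_2$ (so that $\xi$ or $1/\eta$ vanishes); these reduce to the shared-eigenvector setting of Theorem~\ref{th:comeg} applied to the pair $\{A_j,B_i\}$.
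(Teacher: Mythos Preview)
Your approach is exactly the paper's: reduce everything to the single-matrix descriptions in Theorems~\ref{th:pd}, \ref{th:pnd}, and~\ref{th:ne}, applied to each $A_j$ in turn, with the uniqueness of $\pm\KK$ coming from the preceding paragraph. The paper's own proof is a one-line appeal to those three theorems; you have simply unpacked that appeal, including the change-of-basis computation that translates the $(c_1,c_2)$-condition of Theorem~\ref{th:ne} into the $(\xi,\eta)$-condition~(c), together with the reciprocation symmetry of the interval $[\lambda_{1j}/\lambda_{2j},\,\lambda_{2j}/\lambda_{1j}]$.
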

\begin{proof}Indeed, conditions (a)--(c) are necessary and sufficient for $\KK$ (or $-\KK$)
to be invariant under all matrices in $\mathcal A$, as follows by
applying Theorem~\ref{th:pd}--\ref{th:ne}. And, as was observed
earlier, no other proper cones can possibly be $\mathcal A$-invariant.
\end{proof}
\begin{rem} It follows directly from the proof of Theorem $\ref{th:two}$ that if in its setting
every three matrices in ${\mathcal A}_1$ (or any five matrices in
$\mathcal A$) have a common invariant proper cone, then there also
exists an $\mathcal A$-invariant proper cone. \end{rem}

\section{Simultaneously diagonalizable matrices}\label{diag}

We now move to square matrices of arbitrary size $m\times m$ but
suppose that all the elements of the family ${\mathcal A}=\{A_1,\ldots,
A_n\}$  under consideration can be put in a diagonal form by the
same similarity transformation $S$ (note that $S$ is allowed to be a
complex matrix). This $S$ then diagonalizes all matrices from
${\mathcal A}_2=\Cone {\mathcal A}$, and moreover from the closed
algebra ${\mathcal A}_3$ generated by ${\mathcal A}$. Denote by $q\
(\leq m)$ the maximal number of distinct eigenvalues for matrices in
${\mathcal A}_2$. If $B_0$ is one of the matrices on which this
number is attained, \eq{b0} B_0=S\diag[b_1 I_{s_1},\ldots,b_q
I_{s_q}]S^{-1},\qquad b_j\in \mathbb{C}, \quad b_i\neq b_j \ \mbox{if}
\ i\neq j,\en then \eq{aj} A_j=S\diag[\lambda_{1j}
I_{s_1},\ldots,\lambda_{qj} I_{s_q}]S^{-1} \text{ for all } A_j\in{\mathcal
A}; \  \text{here} \ \lambda_{ij}\in \mathbb{C}. \en Indeed, if at least
one of the blocks in the middle factor of (\ref{aj})  were different from a
scalar multiple of the identity, then the matrix $B_0+\epsilon A_j$
would have more than $q$ distinct eigenvalues for sufficiently small
$\epsilon$. (Note in passing, though this fact is not needed in what
follows,  that because of (\ref{aj}) $q$ is also the maximal number of
distinct eigenvalues of the matrices in a larger set ${\mathcal A}_3$.)

Assume there exists an  $\mathcal A$-invariant proper
cone. Then obviously all products $A_1^{m_1}\cdots A_n^{m_n}$ ($m_i\in \Z_+$, the set of nonnegative integers) are
Vandergraft matrices. Due to the diagonalizability, this requirement
amounts to
$\max_i\abs{\lambda_{i1}^{m_1}\cdots\lambda_{in}^{m_n}}$ being
attained on some $i$ for which
$\lambda_{i1}^{m_1}\cdots\lambda_{in}^{m_n}\geq 0$.
For every $n$-tuple $(m_1,\ldots, m_n)\in \Z_+^n$,
introduce the set
$$ \Omega(m_1,\ldots, m_n)=\{i_0\in \{1,2,\ldots, q\}\ :\
\max_{1\leq i\leq q} \, |\lambda_{i1}^{m_1}\cdots
\lambda_{in}^{m_n}|= \lambda_{i_01}^{m_1}\cdots
\lambda_{i_0n}^{m_n} \}. $$ Although  $\Omega(m_1,\ldots,
m_n)$ need not be a singleton, we note that there is a unique index
$p=p(m_1,\ldots, m_n)\in \Omega(m_1,\ldots, m_n)$ for which
 $$\max_{i_0\in \Omega(m_1,\ldots, m_n)}\, |b_{i_0}|=b_p. $$ Indeed, this follows from the Vandergraft
property of the matrix
$$ A_1^{m_1}A_2^{m_2}\cdots A_n^{m_n}+B_0$$
and the condition $b_i\neq b_j$ if $i\neq j$.
(Note that we take  $X^0=I$ for every square matrix $X$ regardless if $X$ is singular or not.)
We let $P=\cup p(m_1,\ldots,
m_n)$
where the union is taken over all $n$-tuples $(m_1,\ldots, m_n)\in \Z_+^n$.
Permuting the columns of $S$ if necessary,
we may suppose without loss of generality that this set is
$P=\{1,\ldots,k\}$, where $k\leq q$.

\begin{thm}\label{th:diag}In the notation {\em (\ref{aj})} and for $k$ as introduced
above, $\mathcal A$-invariant proper cones exist if and only if
\eq{feb91} \lambda_{ij}\geq 0  \text{ for all }  i=1,\ldots k \text{ and }
j=1,\ldots,n. \en\end{thm}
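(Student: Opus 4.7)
The plan is to prove necessity and sufficiency separately. The necessity is essentially a direct invocation of Theorem \ref{th:V}(iii) applied to a suitable perturbed product matrix, whereas sufficiency calls for an explicit construction whose main subtlety lies in verifying pointedness.

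For necessity, fix any $i \in P$, so $i = p(m_1,\ldots,m_n)$ for some $\mathbf{m} = (m_1,\ldots,m_n) \in \Z_+^n$. The matrix $M_\epsilon := A_1^{m_1}\cdots A_n^{m_n} + \epsilon B_0$ lies in the closed algebra generated by $\mathcal{A}$ and therefore preserves any hypothetical $\mathcal{A}$-invariant proper cone $\KK$. A short perturbative spectral analysis --- using that the $b_j$ are distinct and that $b_p$ is real, non-negative, and strictly larger than $\Re b_j$ for every $j \neq p$ --- shows that for all sufficiently small $\epsilon > 0$ the spectral radius of $M_\epsilon$ is attained uniquely at the non-negative real eigenvalue $\mu_p(\mathbf{m}) + \epsilon b_p$ (where $\mu_j(\mathbf{m}) := \prod_l \lambda_{jl}^{m_l}$), whose eigenspace is exactly $V_p = V_i$. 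Since $b_p \in \R$, the subspace $V_p \cap \R^m$ is a genuine real $s_p$-dimensional subspace, and Theorem \ref{th:V}(iii) supplies a nonzero $v \in V_p \cap \R^m \cap \KK$. Then $A_j v = \lambda_{ij} v$ must be real, forcing $\lambda_{ij} \in \R$, and pointedness of $\KK$ excludes $\lambda_{ij} < 0$ (otherwise $\pm \lambda_{ij} v$ would both lie in $\KK$). Thus $\lambda_{ij} \geq 0$ for all $i \in P$ and $j$.

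For sufficiency I would construct $\KK$ as the closed conical hull of a semigroup orbit. Set $W := \bigoplus_{i \in P} V_i \cap \R^m$, a real subspace since each $b_i$ with $i \in P$ is real; fix a real basis $\{u_i^{(l)} : i \in P,\ l = 1,\ldots,s_i\}$ of $W$ with $u_i^{(l)} \in V_i \cap \R^m$, and let $\KK_W^0 := \Cone\{u_i^{(l)}\}$ denote the corresponding positive orthant, which is a proper cone in $W$ and is $A_j|_W$-invariant because $A_j$ acts as the non-negative scalar $\lambda_{ij}$ on each $V_i$. Choose vectors $v_0^{(1)},\ldots,v_0^{(R)} \in \R^m$ whose $W$-projections have strictly positive $u_i^{(l)}$-coordinates (that is, lie in the relative interior of $\KK_W^0$) and whose union linearly spans $\R^m$; both requirements are simultaneously realizable. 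Set
\[
\KK := \overline{\Cone\bigl\{A_1^{m_1}\cdots A_n^{m_n}v_0^{(r)} : \mathbf{m} \in \Z_+^n,\ r = 1,\ldots,R\bigr\}}.
\]
Invariance of $\KK$ under each $A_j$ is automatic from the (commutative) semigroup structure and the closure operation, closedness and convex-cone structure are built in, and solidity follows because the generators include the linearly spanning set $\{v_0^{(r)}\}$.

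The main obstacle is pointedness, which I would handle via a two-step projection argument onto $W$ and $W' := \bigoplus_{i \notin P} V_i \cap \R^m$. First, the $W$-projection of a generator equals $\sum_{i \in P} \mu_i(\mathbf{m}) v_0^{(r),(i)}$, a non-negative combination (since $\mu_i(\mathbf{m}) \geq 0$) of points of $\KK_W^0$; hence the $W$-projection sends $\KK$ into the pointed cone $\KK_W^0$, and any $v \in \KK \cap (-\KK)$ must have zero $W$-component and lie in $W'$. Second, to prove $\KK \cap W' = \{0\}$, I would invoke the crucial estimate $|\mu_{i'}(\mathbf{m})| \leq \mu_{p(\mathbf{m})}(\mathbf{m}) \leq \sum_{i \in P} \mu_i(\mathbf{m})$ for $i' \notin P$, which is exactly the dominance property encoded in the very definition of $P$. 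Because each $v_0^{(r),(i)}$ has strictly positive $u_i^{(l)}$-coordinates, the vanishing of each individual coordinate of the $W$-projection of a limiting non-negative combination $\sigma_n = \sum_{r,\mathbf{m}} c_{n,r,\mathbf{m}} A_1^{m_1}\cdots A_n^{m_n} v_0^{(r)}$ translates into $\sum_{r,\mathbf{m}} c_{n,r,\mathbf{m}} \mu_i(\mathbf{m}) \to 0$ for every $i \in P$; combined with the estimate above, this bounds the norm of the $W'$-projection of $\sigma_n$ by $C \sum_{i \in P} \sum_{r,\mathbf{m}} c_{n,r,\mathbf{m}} \mu_i(\mathbf{m}) \to 0$. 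Therefore the $W'$-projection of any $v \in \KK \cap W'$ vanishes, so $v = 0$ and $\KK$ is pointed, completing the construction.
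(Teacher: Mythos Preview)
Your sufficiency argument is correct and follows the same blueprint as the paper: generate a cone by the $\mathcal{A}$-orbit of a spanning set whose $W$-projections lie in a positive orthant $\KK_W^0$, and establish pointedness via the dominance estimate $|\mu_{i'}(\mathbf{m})|\leq \sum_{i\in P}\mu_i(\mathbf{m})$ for $i'\notin P$. The paper uses slightly different generators (the basis $F_+$ itself together with $f+F_-$ and $f+F_c$, where $f$ is the sum of the vectors in $F_+$), but the pointedness mechanism --- a linear inequality bounding the ``$W'$-part'' by the ``$W$-part'' that is preserved under nonnegative combinations and limits --- is identical.

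Your necessity argument, however, contains a genuine gap. You assert that $b_p$ (with $p=p(\mathbf{m})$) is ``strictly larger than $\Re b_j$ for every $j\neq p$''. This is false: the definition of $p(\mathbf{m})$ only yields $b_p=\max_{i_0\in\Omega(\mathbf{m})}|b_{i_0}|$, so $\Re b_j<b_p$ is guaranteed only for $j\in\Omega(\mathbf{m})$; for $j\notin\Omega(\mathbf{m})$ the value $b_j$ can be arbitrarily large. (Concretely: with $A_1=\diag(2,1,1)$, $A_2=\diag(1,1,2)$ and $B_0=A_1+2A_2=\diag(4,3,5)$, for $\mathbf{m}=(1,0)$ one has $p=1$ but $b_1=4<5=b_3$.) Your desired perturbative conclusion --- that the spectral radius of $M_\epsilon=A_1^{m_1}\cdots A_n^{m_n}+\epsilon B_0$ is attained uniquely at $\mu_p+\epsilon b_p$ for small $\epsilon>0$ --- is nonetheless \emph{true}, but requires a two-case analysis: for $j\notin\Omega(\mathbf{m})$ use the strict inequality $|\mu_j|<\mu_p$, while for $j\in\Omega(\mathbf{m})\setminus\{p\}$ use $\mu_j=\mu_p$ together with $\Re b_j<b_p$. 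The paper circumvents this by perturbing with $\epsilon I+\delta B_0$ (the identity shift strictly separates $\Omega(\mathbf{m})$ from its complement before $B_0$ is brought in), then raising to a high power $l$ and multiplying by $A_j$, finally reading off $\lambda_{i_0j}\geq 0$ from Vandergraft condition~(i) rather than (iii). Once your gap is filled, your route via condition~(iii) and a direct eigenvector argument is arguably cleaner.
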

\begin{proof}{\sl ``Only if" part}. For an arbitrarily fixed $i_0\in\{1,\ldots,k\}$, pick an
$n$-tuple $m_1,\ldots,m_n$ such that \[
\lambda_{i_01}^{m_1}\cdots\lambda_{i_0n}^{m_n}=
\max_{i=1,\ldots q}
\abs{\lambda_{i1}^{m_1}\cdots\lambda_{in}^{m_n}}.\] Then
\[ \lambda_{i_01}^{m_1}\cdots\lambda_{i_0n}^{m_n}+\epsilon>
\abs{\lambda_{i1}^{m_1}\cdots\lambda_{in}^{m_n}+\epsilon}, \quad
i\not\in \Omega(m_1,\ldots, m_n)\] for any $\epsilon>0$, and therefore
\[ \lambda_{i_01}^{m_1}\cdots\lambda_{i_0n}^{m_n}+\epsilon+\delta b_{i_0}>
\abs{\lambda_{i1}^{m_1}\cdots\lambda_{in}^{m_n}+\epsilon+\delta
b_{i}}, \quad i\neq i_0\] for $\delta>0$ small enough. Having fixed
$\epsilon$ and $\delta\ (>0)$, observe that then for any $j$ such that
$\lambda_{i_0j}\neq 0$,
\[ \abs{(\lambda_{i_01}^{m_1}\cdots\lambda_{i_0n}^{m_n}+\epsilon+\delta b_{i_0})^l \lambda_{i_0j}}>
\abs{(\lambda_{i1}^{m_1}\cdots\lambda_{in}^{m_n}+\epsilon+\delta
b_{i_0})^l\lambda_{ij}}, \quad i\neq i_0\] if the positive integer $l$ is large enough.

In other words,
$(\lambda_{i_01}^{m_1}\cdots\lambda_{i_0n}^{m_n}+\epsilon+\delta
b_{i_0})^l\lambda_{i_0j}$ is strictly bigger (by absolute value) than
other eigenvalues of $$B_l:=(A_1^{m_1}\cdots A_n^{m_n}+\epsilon
I+\delta B_0)^lA_j. $$ But an $\mathcal A$-invariant cone is also
$B_l$-invariant whenever $\epsilon,\delta>0$. So, \[
(\lambda_{i_01}^{m_1}\cdots\lambda_{i_0n}^{m_n}+\epsilon+\delta
b_{i_0})^l\lambda_{i_0j}>0.\] Choosing two consecutive values of $l$,
we conclude that in fact $\lambda_{i_0j}>0$.

{\sl ``If" part}. Denote by $L_+$ the (real) linear span of the first
$s_1+\cdots +s_k$ columns of $S$. Note that since the eigenvalues of
$B_0$ corresponding to these columns of $S$ are real (see (\ref{b0})),
the first $s_1+\cdots +s_k$ columns of $S$ are real as well (or more
precisely can be made real if necessary, by (complex) scalings); thus
$L_+\subset \R^m$. Let us represent $\R^m$ as the direct sum of the
subspaces $L_r$ and $L_c$ spanned respectively by the real columns
of $S$ and by the real and imaginary parts of non-real (if any) columns
of $S$. By definition of $L_+$, it lies in $L_r$. Moreover, $L_r$ can be
written as $L_r=L_+\dotplus L_-$, where $L_-$ is also spanned by
columns of $S$.

Choose bases $F_\pm$ in $L_\pm$ consisting of columns of $S$, and
a basis $F_c$ in $L_c$ consisting of  vectors $u_i,v_i\in\R^m$ such
that \[ A_ju_i=(\Re\lambda_{ij})u_i-(\Im\lambda_{ij})v_i, \quad
A_jv_i=(\Im\lambda_{ij})u_i+(\Re\lambda_{ij})v_i. \] Then of course
\eq{lc}\begin{aligned}(A_1^{m_1}A_2^{m_2}\cdots
A_n^{m_n})u_i & =(\Re\mu_{i})u_i-(\Im\mu_{i})v_i, \\
(A_1^{m_1}A_2^{m_2}\cdots A_n^{m_n})v_i &
=(\Im\mu_{i})u_i+(\Re\mu_{i})v_i, \end{aligned} \en where $m_j\in \Z_+$ and
$\mu_i=\lambda_{i1}^{m_1}\cdots\lambda_{in}^{m_n}$.

Denote by $f$ the sum of all elements in $F_+$, and let $\KK_0$ stand
for the smallest $\mathcal A$-invariant convex cone  containing
$F_+$, $f+F_-$ and $f+F_c$. The span of $\KK_0$ contains the basis
$F=F_+\cup F_-\cup F_c$ of the whole space $\R^m$, so that it
coincides with $\R^m$. In other words, $\KK_0$ is a {\em
reproducing} convex cone, and therefore it is solid.

The closure $\KK$ of $\KK_0$ also is a convex solid cone invariant
under $\mathcal A$. It remains only to show that $\KK$ is pointed.

Let us relabel vectors in $F$ by $f_1,\ldots,f_m$, with the first $p=s_1+\cdots +s_k$
vectors belonging to $F_+$, and denote by $\alpha_j(v)$ the coordinates of
the vector $v$ in its expansion along $F$.

By (\ref{feb91}), for $v=A_1^{m_1}\cdots A_n^{m_n}f_j$, $j=1,\ldots,p$, we have
\[ \alpha_j(v)\geq 0 \text{ and } \alpha_i(v)=0 \text{ for all } i\neq j.\]
Consequently, for such $v$ \eq{env}
\sum_{j=1}^p\alpha_j(v)\geq\sum_{p+1}^m\abs{\alpha_j(v)}.\en
Inequality (\ref{env}) obviously holds for $v\in f+F_-$ or $f+F_c$, since
then the first $p$ coordinates $\alpha_j(v)$ and exactly one of the
other $m-p$ coordinates are equal to one, while the remaining ones
are all zeros. The construction of the subspace $L_+$ (for which $F_+$
is a basis) guarantees that inequality (\ref{env}) persists for vectors
$v$ being images of $f+F_-$ under arbitrary products
$A_1^{m_1}\cdots A_n^{m_n}$. Indeed, the left hand side of (\ref{env})
is \eq{feb92}
\sum_{i=1}^p\lambda_{i1}^{m_1}\cdots\lambda_{im}^{m_n}, \en
while the right hand side is just one summand of the form \eq{feb910}
\abs{\lambda_{j1}^{m_1}\cdots\lambda_{jm}^{m_n}},  \en with $j$
between $p+1$ and $m$. Since all summands in (\ref{feb92}) are
non-negative, and at least one of them is bigger than or equal to
(\ref{feb910}) --- this is where the definition of $L_+$ is being used, ---
inequality (\ref{env}) will hold for such $v$. Moreover, for images of
$f+F_c$ under $A_1^{m_1}\cdots A_n^{m_n}$ we have, due to
(\ref{lc}): \eq{k0}
\sum_{j=1}^p\alpha_j(v)\geq\frac{1}{2}\sum_{p+1}^m\abs{\alpha_j(v)},
\quad \alpha_j(v)\geq 0 \text{ for } j=1,\ldots,p.\en Since inequalities
(\ref{env}) and (\ref{k0}) persist under taking linear combinations with
non-negative coefficients and passing to limits, we see that (\ref{env})
holds in fact for all $v\in\KK$. On the other hand, if (\ref{env}) holds
after switching from $v$ to $-v$, then $\alpha_j(v)=0$ for all
$j=1,\ldots,m$, so that $v=0$. \end{proof}

\section{Families of matrices  with common dominant eigenvector}\label{common}

Theorem~\ref{th:comeg} gives a full treatment of families of $2\times
2$ matrices sharing a dominant eigenvector. In higher dimensions,
however, we have to impose additional restrictions.

\begin{thm} \label{NormCom}
Let $\mathcal A$ be a set of $m\times m$  Vandergraft matrices that
share a common dominant eigenvector $x$ and satisfy at least one of
the following two conditions:
\begin{itemize}
\item[${\rm (1)}$] The matrices in $\mathcal A$ are simultaneously
    similar, with a real similarity matrix, to normal matrices;
\item[${\rm (2)}$] $\mathcal A$ is finite, the matrices in $\mathcal
    A$ commute and for every $A\in {\mathcal A}$, $\rho (A)$ is a
    semisimple eigenvalue, i.e., a simple root of the minimal polynomial, of $A$.
\end{itemize}
Then the matrices in $\mathcal A$ have a common invariant proper
cone $\KK$ with the additional property that $x$ belongs to the
interior of $\KK$.
\end{thm}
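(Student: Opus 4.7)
The plan is to exhibit a common invariant cone as a ``circular'' cone around $x$ whose half-aperture is measured in a cleverly chosen norm. Concretely, I will construct a norm $\|\cdot\|_*$ on $\mathbb{R}^m$ for which every $A\in\mathcal{A}$ satisfies
\[
\|Av\|_*\leq \rho(A)\|v\|_*\qquad (v\in\mathbb{R}^m),
\]
and then, for a fixed $\alpha\in(0,1)$, take
\[
\KK=\bigl\{\lambda x+w : \lambda\geq 0,\ \|w\|_*\leq \alpha\lambda\|x\|_*\bigr\}.
\]
Convexity and closedness of $\KK$ are immediate from the triangle inequality. Pointedness uses $\alpha<1$: if $y=\lambda x+w$ and $-y=\lambda' x+w'$ both lie in $\KK$, adding gives $(\lambda+\lambda')\|x\|_*=\|w+w'\|_*\leq\alpha(\lambda+\lambda')\|x\|_*$, forcing $\lambda=\lambda'=0$ and then $y=0$. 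Solidness, together with $x$ being interior, follows from the inclusion in $\KK$ of the $\|\cdot\|_*$-ball of radius $\alpha\|x\|_*$ centered at $x$. Finally, $A$-invariance is the one-line computation $A(\lambda x+w)=(\lambda\rho(A))x+Aw$ with $\|Aw\|_*\leq\rho(A)\|w\|_*\leq\alpha(\lambda\rho(A))\|x\|_*$.

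Under hypothesis (1), I let $S$ be the common real similarity bringing each $A\in\mathcal{A}$ to a normal matrix $N_A=S^{-1}AS$ and set $\|v\|_*:=\|S^{-1}v\|$, where $\|\cdot\|$ is the standard Euclidean norm. The required contraction is
\[
\|Av\|_*=\|N_A S^{-1}v\|\leq \|N_A\|\,\|S^{-1}v\|=\rho(N_A)\|v\|_*=\rho(A)\|v\|_*,
\]
using the classical identity that the operator norm of a normal matrix equals its spectral radius.

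Under hypothesis (2), I first discard any $A\in\mathcal{A}$ with $\rho(A)=0$ (such $A$ is Vandergraft with semisimple zero spectral radius, hence $A=0$, which is trivially invariant for any cone) and rescale the rest by $A_j\mapsto A_j/\rho(A_j)$; positive scaling does not affect invariant cones, so it suffices to build $\KK$ for the rescaled family, in which every $A_j$ satisfies $A_jx=x$ and $\rho(A_j)=1$. Semisimplicity of $\rho(A_j)=1$ together with condition (ii) of Theorem~\ref{th:V} forces every eigenvalue of $A_j$ on the unit circle to be semisimple, which makes each $A_j$ power-bounded. Since $\mathcal{A}$ is finite and commuting, the semigroup $\Sigma=\bigl\{A_1^{k_1}\cdots A_n^{k_n}:k_i\in\Z_+\bigr\}$ is uniformly bounded in operator norm, and I define $\|v\|_*:=\sup_{B\in\Sigma}\|Bv\|$. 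This is a well-defined norm (positive definite because $I\in\Sigma$, subadditive as a supremum of seminorms). Commutativity gives $BA_j\in\Sigma$ whenever $B\in\Sigma$, so
\[
\|A_jv\|_*=\sup_{B\in\Sigma}\|BA_jv\|\leq\sup_{B'\in\Sigma}\|B'v\|=\|v\|_*=\rho(A_j)\|v\|_*,
\]
as required.

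The principal obstacle is constructing the norm in case (2): one must know that the semigroup $\Sigma$ is uniformly bounded, and this is exactly where both hypotheses are indispensable. Without the semisimplicity of $\rho(A)$, Jordan blocks of size $\geq 2$ sitting on the unit circle would produce polynomial blow-up of the iterates, while without commutativity there is no way to absorb an extra factor $A_j$ back into the semigroup. Once the norm is in hand, the properness of $\KK$, the invariance under $\mathcal{A}$, and the fact that $x$ is an interior point all follow by the triangle-inequality arguments sketched in the first paragraph.
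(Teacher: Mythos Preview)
Your proof is correct. The overall strategy---build a norm in which each $A/\rho(A)$ is a contraction and take the ``ice-cream cone'' around $x$---matches the paper's, and in case~(1) your pulled-back Euclidean norm is exactly what the paper uses. The minor cosmetic difference is that the paper fixes a complementary subspace to $\operatorname{Span}\{x\}$ (so the decomposition $\lambda x+w$ is unique and one can take $\alpha=1$), while you allow $w$ to be arbitrary and compensate with $\alpha<1$; your pointedness argument handles this cleanly.

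In case~(2) the constructions of the contracting norm genuinely diverge. The paper first block-diagonalizes via Lemma~\ref{2'} to split off the $x$-direction, then (Lemma~\ref{2}) produces a positive definite $V$ with $V-B_j^TVB_j\geq 0$ by an explicit series when all spectral radii are $<1$ and by an inductive reduction otherwise; the resulting cone is ellipsoidal. Your route is shorter: you take $\|v\|_*=\sup_{B\in\Sigma}\|Bv\|$, where $\Sigma=\{A_1^{k_1}\cdots A_n^{k_n}\}$, and note that commutativity makes $\Sigma$ closed under right multiplication by each $A_j$. The uniform boundedness of $\Sigma$, which you assert, is in fact immediate from submultiplicativity once each $A_j$ is power-bounded: $\|A_1^{k_1}\cdots A_n^{k_n}\|\le\prod_j\sup_k\|A_j^k\|$. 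So your case~(2) argument is more elementary and unified with case~(1), at the cost of producing a cone that need not be ellipsoidal; the paper's version gives an explicit quadratic description of $\KK$.
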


For the proof of Theorem \ref{NormCom} we need two lemmas.

\begin{lem}\label{2'}
Let $A_1,\ldots, A_q$ be commuting $m\times m$ real matrices.
Assume that there exists $\lambda_0$ real with the following
properties:
\begin{itemize}
\item[${\rm (1)}$] there exists a nonzero $x$ such that
$A_jx=\lambda_0x$ for $j=1,2,\ldots, q$.
\item[${\rm (2)}$] $\lambda_0$  is a semisimple eigenvalue of
$A_j$, for $j=1,2,\ldots, q$.
\end{itemize}
Then there exists an invertible real matrix $S$ such that $S^{-1} A_jS$ have the form
$$S^{-1}
A_jS=\left[\begin{array}{cc} \lambda_0 &0 \\ 0 & B_j\end{array}\right],
\quad j=1,2,\ldots, q, $$ where $B_1,\ldots, B_q$ are $(m-1)\times (m-1)$
matrices. \end{lem}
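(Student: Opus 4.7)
My plan is to produce a real invertible matrix $S$ whose first column is $x$ and whose remaining columns span a common $A_j$-invariant subspace $W \subset \R^m$ of dimension $m-1$ that does not contain $x$. Once such a $W$ is in hand the prescribed block form is automatic, with the $(1,1)$ entry $\lambda_0$ coming from $A_j x = \lambda_0 x$.

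To build $W$ I use spectral projections. For each $j$, the semisimplicity of $\lambda_0$ for $A_j$ means the minimal polynomial of $A_j$ factors as $(t-\lambda_0)\, q_j(t)$ with $q_j \in \R[t]$ and $q_j(\lambda_0) \neq 0$. Setting $P_j := q_j(A_j)/q_j(\lambda_0)$ then yields a real idempotent with $\operatorname{Im} P_j = E_j := \ker(A_j - \lambda_0 I)$ and $\ker P_j = F_j := \ker q_j(A_j)$; the latter is the sum of the generalized eigenspaces of $A_j$ for all eigenvalues different from $\lambda_0$ and is $A_j$-invariant. Since each $P_j$ is a real polynomial in $A_j$ and the $A_i$ commute, the projections $P_j$ commute pairwise and each $P_j$ commutes with every $A_i$. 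Consequently every $E_j$ and every $F_j$ is invariant under each of $A_1,\ldots,A_q$.

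Next, set $P := P_1 P_2 \cdots P_q$; by commutativity and idempotence $P$ is again a real projection. A short induction using the standard identities for commuting projections,
\[
\operatorname{Im}(P_i P_j) = \operatorname{Im} P_i \cap \operatorname{Im} P_j, \qquad \ker(P_i P_j) = \ker P_i + \ker P_j,
\]
shows that $\operatorname{Im} P = E := \bigcap_j E_j$ and $\ker P = F := \sum_j F_j$. Hence $\R^m = E \oplus F$, both summands are invariant under every $A_i$, and each $A_i$ acts on $E$ as the scalar $\lambda_0$ (since $E \subseteq E_i$).

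Hypothesis (1) puts $x \in E$, so I choose any complement $C$ of the line $\R x$ inside $E$ and set $W := C \oplus F$. Then $\dim W = m-1$, $W$ is $A_i$-invariant for every $i$, and $x \notin W$. Taking $S$ to have $x$ as its first column and any basis of $W$ filling out the remaining columns produces the claimed decomposition, with $B_j$ the matrix of $A_j|_W$ in the chosen basis of $W$. The one point whose verification requires more than routine work is the kernel identity $\ker(P_1 \cdots P_q) = \sum_j \ker P_j$ for commuting projections; both inclusions are elementary, with the nontrivial direction handled by writing $v = P_q v + (v - P_q v)$ and inducting on $q$.
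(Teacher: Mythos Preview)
Your proof is correct and takes a genuinely different route from the paper's. The paper argues by induction on $q$: it first puts $A_1$ into block-diagonal form $\diag(\lambda_0 I_p,\tilde A_1)$ via a real similarity (using semisimplicity of $\lambda_0$ for $A_1$), observes that commutativity forces each $A_j$ to respect this block structure, and then applies the induction hypothesis to the top-left $p\times p$ blocks $B_2,\ldots,B_q$, which still share the eigenvector $y$ (the top part of $x$) with semisimple eigenvalue $\lambda_0$. Your argument instead constructs the invariant complement $W$ in one shot from the product $P=P_1\cdots P_q$ of the spectral projections onto the $\lambda_0$-eigenspaces, identifying $E=\bigcap_j E_j=\Im P$ and $F=\sum_j F_j=\ker P$ directly. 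The paper's approach is a bit more elementary, needing only block-matrix manipulation rather than the projection calculus; yours is more structural, avoids the induction, and makes transparent exactly which subspace plays the role of the common complement (namely any hyperplane in $E$ through the origin missing $x$, direct-summed with $F$).
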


\begin{proof} Induction on $q$. For $q=1$, the result is clear. Assume Lemma \ref{2'} has been proved for
$q-1$
matrices. Applying a simultaneous similarity to $A_1, \ldots, A_q$, we may assume that
$$ A_1=\left[\begin{array}{cc} \lambda_0I_p &0 \\ 0 &
\widetilde{A}_1\end{array}\right], $$ where $\lambda_0$ is not an eigenvalue of $\widetilde{A}_1$. Since
$A_1,\ldots, A_q$ commute we have
$$ A_j=\left[\begin{array}{cc} B_j &0 \\ 0 &
C_j\end{array}\right],\quad j=2,3,\ldots, q.$$ Here the matrices $B_2,\ldots, B_q$ are $p \times p$. Clearly,
the vector $x$ (which exists by (1)) has the form $x=\left[\begin{array}{c} y \\ 0
\end{array}\right]$, where $y\neq 0$ has $p$ components.
Then $B_jy=\lambda_0y$. One verifies that $\lambda_0$ is a semisimple eigenvalue of each $B_j$. By the induction
hypothesis, there exists an invertible real $T$ such that
$$ T^{-1}B_jT=\left[\begin{array}{cc} \lambda_0 & 0 \\ 0 &
\widetilde{B}_j \end{array}\right], \quad j=2,3,\ldots, q. $$ Now take $S=\left[\begin{array}{cc} T & 0 \\ 0 & I
\end{array}\right]$ to satisfy the lemma. \end{proof}
\bigskip

\begin{lem}\label{2}
Let $A_1,\ldots, A_q$ be commuting $m\times m$ complex matrices
with the following properties:
\begin{itemize}
\item[${\rm (1)}$] $\rho (A_j)\leq 1$ for $j=1,2,\ldots, q$;
\item[${\rm (2)}$]
every eigenvalue (if exists) on the unit circle of every $A_j$ is semisimple.
\end{itemize}
Then there  exists a positive definite matrix $V$ such that \eq{3} V-A_j^*VA_j\geq 0, \quad {\rm for}\
j=1,2,\ldots, q. \en ($A\geq B$ means that $A-B$ is positive semidefinite).

Moreover, if all $A_j$'s are real, then $V$ can be also chosen real.
\end{lem}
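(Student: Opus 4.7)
The plan is to exploit the commuting structure by decomposing $\mathbb{C}^m$ into common generalized eigenspaces of the family $\{A_1,\ldots,A_q\}$ and building $V$ block-diagonally over this decomposition. Since the $A_j$'s commute, each generalized eigenspace $\ker(A_j-\mu I)^m$ is invariant under every other $A_i$; iterating yields a direct sum decomposition
$$\mathbb{C}^m = \bigoplus_k W_k$$
where every $W_k$ is jointly invariant under all $A_j$, and on $W_k$ each $A_j$ has a single eigenvalue $\mu_j^{(k)}$. Since each $A_j$ is block-diagonal with respect to this decomposition, if I can find on each block a Hermitian positive definite $V_k$ with $V_k - (A_j|_{W_k})^*V_k(A_j|_{W_k}) \ge 0$ for all $j$, then the block-diagonal form $V := \bigoplus_k V_k$ is the required matrix.

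Fix a block $W_k$ and split the index set as $J_1 = \{j : |\mu_j^{(k)}|=1\}$ and $J_2 = \{j : |\mu_j^{(k)}|<1\}$. For $j \in J_1$, hypothesis (2) makes $\mu_j^{(k)}$ semisimple for $A_j$, so its generalized eigenspace coincides with its ordinary eigenspace; since $W_k$ lies inside the former, $A_j|_{W_k} = \mu_j^{(k)}I_{W_k}$. Thus for any $V_k$, $(A_j|_{W_k})^*V_k(A_j|_{W_k}) = V_k$ when $j \in J_1$ and the inequality holds automatically. It therefore suffices to handle the indices $j \in J_2$, where $\rho(A_j|_{W_k}) = |\mu_j^{(k)}| < 1$. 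For these I set
$$V_k := \sum_{\mathbf n \in \Z_+^{J_2}} B_{\mathbf n}^* B_{\mathbf n}, \qquad B_{\mathbf n} := \prod_{j \in J_2} (A_j|_{W_k})^{n_j}.$$
The series converges absolutely via the standard bound $\|A_j^n\| \le C_j(1+n)^m |\mu_j^{(k)}|^n$, and positive definiteness is immediate since the $\mathbf n = 0$ term is $I_{W_k}$. Commutativity gives $A_iB_{\mathbf n} = B_{\mathbf n + e_i}$, so the series telescopes and for $i \in J_2$,
$$V_k - (A_i|_{W_k})^*V_k(A_i|_{W_k}) = \sum_{\mathbf n \in \Z_+^{J_2},\, n_i = 0} B_{\mathbf n}^* B_{\mathbf n} \ge 0.$$

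For the real-matrix addendum I will apply the complex construction and symmetrize. If the $A_j$ are real and $V$ is complex Hermitian positive definite satisfying the inequalities, then $\overline V$ does too, hence so does $V_{\R} := \tfrac12(V + \overline V) = \Re V$. This $V_{\R}$ is real symmetric, and positive definite since for every $x \in \R^m \setminus \{0\}$ one has $x^T V_{\R} x = \Re(x^* V x) = x^* V x > 0$.

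The principal obstacle is the opening structural reduction, namely extracting from condition (2) that $A_j$ acts as a scalar on each common generalized eigenspace where $|\mu_j^{(k)}|=1$. Once that collapse is verified, the remainder is a routine Stein/Lyapunov series construction in which commutativity makes the different index directions decouple cleanly.
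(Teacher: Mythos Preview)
Your proof is correct and follows essentially the same approach as the paper's: both use commutativity and semisimplicity to reduce matrices with unimodular eigenvalues to scalars on suitable invariant subspaces, then apply the multi-index Stein/Lyapunov series $\sum B_{\mathbf n}^*B_{\mathbf n}$ on the pieces where all restrictions are strict contractions, and finally symmetrize $V\mapsto V+\overline V$ for the real case. The only organizational difference is that you perform the full joint generalized-eigenspace decomposition up front, whereas the paper argues by induction on $q$, peeling off one matrix at a time via its root subspaces; the content is the same.
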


\begin{proof} It is enough to prove the complex case only. Indeed, suppose all $A_j$'s are real and we have
proved that there exists a (generally, complex) positive definite $V$ such that (\ref{3}) holds. Then by taking
complex conjugates in (\ref{3}) we obtain \eq{4} \overline{V}-A_j^T\overline{V}A_j\geq 0, \quad j=1,2,\ldots,
j=1,2,\ldots, q. \en Adding (\ref{3}) and (\ref{4}) we see that $U- A_j^TUA_j\geq 0$, where $U:=V+\overline{V}$
is positive definite and real.

We now prove the complex case. If $\rho (A_j)<1$ for all $j$, let \eq{8} V=\sum (A_1^*)^{z_1}\cdots
(A_q^*)^{z_q}A_q^{z_q}\cdots A_1^{z_1}, \en where the sum is taken over all $q$-tuples $(z_1,\ldots, z_q)$,
$z_j\in \Z_+$. It is easy to see
(using $\rho (A_j)<1$) that the series in (\ref{8})
converges absolutely.  Clearly $V\geq I$ and
\begin{eqnarray*}
V &- &A_j^*VA_j \\ &= & \sum
(A_1^*)^{z_1}\cdots (A_{j-1}^*)^{z_{j-1}}(A_{j+1}^*)^{z_{j+1}}\cdots (A_q^*)^{z_q}A_q^{z_q}\cdots
A_{j+1}^{z_{j+1}}A_{j-1}^{z_{j-1}} \cdots A_1^{z_1}\\[3mm] &\geq & 0, \end{eqnarray*} where the sum is taken over
all $(q-1)$-tuples
$(z_1,\ldots, z_{j-1},z_{j+1},\ldots, z_q)\in \Z_+^{q-1}$.

So suppose that $\rho (A_j)=1$ for some $j$, say $\rho (A_1)=1$. Note that the hypotheses and the conclusions of
Lemma \ref{2} are invariant under simultaneous similarity of $A_1,\ldots, A_q$:
$$ A_j \mapsto  S^{-1}A_jS, \quad j=1,2,\ldots
,q,$$ where $S$ is any invertible $m\times m$ matrix. Then, considering each root subspace of $A_1$ separately,
and taking advantage of the commutativity property $A_jA_k=A_kA_j$ for $j,k=1,2,\ldots, q$, we reduce the proof
to the case $A_1=\lambda I$, $|\lambda|=1$. Then obviously $V-A_1^*VA_1=0$, and it suffices to prove (\ref{3})
for $A_2,\ldots, A_q$. This follows by induction on $q$, the case $q=1$ being easy. \end{proof} \bigskip

We now proceed with the proof of Theorem \ref{NormCom}.

\begin{proof}
Assume first that (1) holds. We may assume that $\mathcal A$
consists of normal matrices and that $\|x\|=1$ (the norm is Euclidean).
Let $\mathbb{M}$ be the orthogonal complement to ${\rm Span}\,
\{x\}$. We claim that:
$$\mathcal{K} := \{c_1 x + y\, : \, c_1\in \mathbb{R}, \ \ y\in \mathbb{M}, \ \  c_1\geq
\|y\|\}$$ is a common invariant proper cone for all $A\in {\mathcal A}$.

Clearly, $\KK$ is a proper cone; therefore  we only have to show that it is invariant with
respect to the matrices. Let $A\in {\mathcal A}$, and let $x,u_2,\ldots,
u_m$ be an orthonormal set with the following properties:
$$ Au_{2k}=\alpha_ku_{2k}+\beta_ku_{2k+1}, \quad Au_{2k+1}=-\beta_ku_{2k} +\alpha_ku_{2k+1}, \quad \mbox{for} \
\ k=1,2,\ldots ,\ell, $$
$$ Au_{s}=\lambda_su_{s} \quad \mbox{for} \ \ s=2\ell+2, 2\ell+3, \ldots, m, $$
where $\alpha_k, \beta_k,\lambda_s$ are real numbers such that
$$\beta_k>0, \quad |\lambda_s|\leq \rho(A), \quad 
\sqrt{\alpha_k^2+\beta_k^2}\leq \rho(A); $$ here $\ell$ is a certain
nonnegative integer. (The existence of such $u_2, \ldots, u_m$ follows
from the canonical form of real normal matrices, see, e.g., \cite{HJ1}.)
Obviously $u_2, \ldots ,u_m$ form an orthonormal basis in
$\mathbb{M}$. Take $$y  = c_1 x + c_2 u_2 + \ldots + c_m u_m\in
\mathcal{K}, $$ thus $c_1\geq\sqrt{c_2^2+\cdots+ c_m^2}$. Then we
have
$$
Ay = \rho(A) c_1 x +
c_2(\alpha_1u_2+\beta_1u_3)+c_3(-\beta_1u_2+\alpha_1u_3) +\cdots
$$ $$ +c_{2\ell}(\alpha_\ell u_{2\ell}+\beta_\ell u_{2\ell+1}) +
c_{2\ell+1}(-\beta_{\ell} u_{2\ell}+\alpha_{\ell}u_{2\ell+1}) + $$
\begin{equation}\lambda_{2\ell+2}c_{2\ell+2}u_{2\ell+2}+ \ldots
+\lambda_m c_mu_m:= \rho(A) c_1 x +w. \label{oct141}\end{equation}
Notice that for $k=1,2,\ldots, \ell$ we have
\begin{eqnarray*}  c_{2k}(\alpha_k u_{2k}+\beta_k u_{2k+1})
&+ & c_{2k+1}(-\beta_{k} u_{2k}+\alpha_{k}u_{2k+1}) \\
&= & (c_{2k}\alpha_k -c_{2k+1}\beta_{k})u_{2k} \ \ +\ \
(c_{2k}\beta_k+c_{2k+1}\alpha_k)u_{2k+1}\end{eqnarray*}
and
$$ \frac{(c_{2k}\alpha_k -c_{2k+1}\beta_{k})^2+
(c_{2k}\beta_k+c_{2k+1}\alpha_k)^2}{\rho(A)^2}=
\frac{(\alpha^2+\beta^2)(c_{2k}^2+c_{2k+2}^2)}{\rho(A)^2}
\leq c_{2k}^2+c_{2k+1}^2. $$
Thus,
\begin{equation}\label{oct142}\|w/\rho(A)\|^2\leq c_2^2+\cdots +c_{2\ell+1}^2+
\frac{\lambda_{2\ell+2}^2}{\rho(A)^2}  c_{2\ell+2}^2  + \cdots+ \frac{\lambda_m^2}{\rho(A)^2} c_m^2 \leq
c_2^2+\cdots
+c_m^2, \end{equation}
and it follows from (\ref{oct141}) and (\ref{oct142}) that $Ay\in \mathcal{K}$.
\bigskip

Assume now that (2) of Theorem \ref{NormCom} holds.
 Let ${\mathcal A}=\{A_1,\ldots, A_q\}$. We may assume
that the spectral radius of each $A_j$ is positive (if
some
$A_j$ is nilpotent, the hypotheses of Theorem \ref{NormCom} (assuming (2)) imply that
it is actually equal to the zero matrix, and
can be ignored). Scaling the $A_j$'s we may further assume that $\rho (A_j)=1$, $j=1,2,\ldots, q$. By Lemma
\ref{2'} we may assume that
$$ A_j=\left[\begin{array}{cc} 1 & 0 \\ 0 & B_j\end{array}\right],
$$
where $B_1,\ldots, B_q$ are $(m-1)\times (m-1) $ matrices. By
Theorem~\ref{th:V}, the hypotheses (1) and (2) of Lemma \ref{2} are
satisfied for $B_1,\ldots, B_q$. Thus, there exists a real positive definite
matrix $V$ such that \eq{6} V-B_j^TVB_j\geq 0, \quad j=1,2,\ldots, q.
\en Then
$$ \KK:=\left\{\left[\begin{array}{c} x \\ y \end{array}\right] \, :\,
x\geq 0, \ \ y\in \mathbb{R}^{m-1} \ {\rm is} \ {\rm such } \ {\rm that} \ y^TVy\leq x^2\right\}
$$
is a common  invariant cone for $A_1, \ldots, A_q$. Indeed, if
$\left[\begin{array}{c} x \\ y \end{array}\right]\in \KK$, then $$A_j
\left[\begin{array}{c} x \\ y
\end{array}\right]=\left[\begin{array}{c} x \\ B_jy
\end{array}\right], $$
and
$$ (B_jy)^TVB_jy\leq
\ {\rm by}\ {\rm (\ref{6})} \ \leq y^TVy\leq x^2, $$ and so $$A_j \left[\begin{array}{c} x \\ y
\end{array}\right]\in \KK. $$
Clearly, $\KK$ is topologically closed, is closed under multiplication
by nonnegative real numbers, is solid and pointed, because of the
positive definiteness of $V$. It remains to prove that $\KK$ is convex.
Thus, let $x_1,x_2\geq 0$ and $y_1,y_2\in \mathbb{R}^{m-1}$ be such
that \eq{7} y_k^TVy_k\leq x_k^2, \quad \mbox{for $k=1,2.$} \en Then
for a number $\alpha$ between $0$ and $1$, we have:
$$ (\alpha y_1+(1-\alpha)y_2)^TV(\alpha y_1+(1-\alpha)y_2)
\leq \alpha^2 x_1^2 +(1-\alpha)^2x_2^2 + 2\alpha (1-\alpha)
(y_1^TVy_2) \leq \ $$  $$ \leq \alpha^2 x_1^2 +(1-\alpha)^2x_2^2 + 2\alpha
(1-\alpha)x_1x_2= (\alpha x_1 +(1-\alpha)x_2)^2
$$ (Cauchy-Schwartz inequality and (\ref{7}) are used in the last step of the derivation),
and the convexity of $\KK$ is proved.
\end{proof}

\section{Examples}\label{examples}
In this section we collect examples that illuminate concepts and
results presented. We use the notation
$$ {\bf e}_1=\left[\begin{array}{c} 1 \\ 0 \end{array}\right], \qquad  {\bf e}_2=\left[\begin{array}{c} 0 \\ 1
\end{array}\right]. $$
\begin{Ex}\label{EMTCounter} Two $2 \times 2$  matrices $A$ and $B$ with negative determinants
such that all words in $A$ and $B$ are Vandergraft matrices  though
there is no $(A,B)$-invariant proper cone.
\end{Ex}

Take \vspace{-.15in}
$$ A=\left[\begin{array}{cc} 1&p \\0&-1 \end{array}\right],\
B=\left[\begin{array}{cc} 1&q \\0&-1\\ \end{array}\right], \quad p\neq
q. $$

All words in $A$ and $B$ are Vandergraft matrices, with $u_1={\bf
e}_1$ as a dominant eigenvector. So, Theorem \ref{th:comeg} applies,
and according to case (iii) in ``Only if'' part of its proof $(A,B)$-invariant
proper cones do not exist. 

Example~\ref{EMTCounter}  shows that Theorem~7.6 in
\cite{EdMcDT} is apparently misstated.

\begin{Ex} \label{PairsNoCommon}
A triple of matrices $T:=\{A, B, C\}, \quad A,B,C \in \pRt$
with the following properties:
\begin{itemize}
\item[${\rm (a)}$] $\det M>0$ for all $M\in T$;
\item[${\rm (b)}$] $A, B, C$ are normal matrices (in particular, diagonalizable);
\item[${\rm (c)}$] there is no $T$-invariant proper cone;
\item[${\rm (d)}$] each pair of matrices in $T$ has a common invariant proper cone;
\item[${\rm (e)}$] no two matrices in $T$ have a common eigenvector.
\end{itemize}
\end{Ex}

The example shows that sharing a common dominant eigenvector is
essential in Corollary~\ref{co:two} and Theorem \ref{NormCom}, and
also that the part of Theorem~\ref{co:4} pertinent to the case when
there are no simultaneously diagonalizable pairs of matrices in
$\mathcal A$ is sharp.

Instead of describing the matrices directly, we will list two linearly
independent eigenvectors and associated eigenvalues for each
matrix. For the eigenvalues simply pick
$\lambda_1(M)>\lambda_2(M)>0$ for each matrix $M \in T$.  As for
the eigenvectors of a matrix $M$, denoting the dominant and non-dominant ones by
$u_1(M)$ and $u_2(M)$ respectively, let
\[ u_1(A)={\bf e}_1,\ u_1(B)=\left[\begin{matrix}1\\ 2\end{matrix}\right],\ u_1(C)=\left[\begin{matrix}1\\
-2\end{matrix}\right] \] and
\[ u_2(A)={\bf e}_2,\ u_2(B)=\left[\begin{matrix}-2\\ 1\end{matrix}\right],\ u_2(C)=\left[\begin{matrix}2\\
1\end{matrix}\right]. \]  Each of the pairs $(A,B)$, $(A,C)$ and $(B,C)$
then satisfies conditions of Corollary~\ref{co:4diagnonneg}, and
therefore has a common invariant proper cone (more specifically, $
{\Cone}\{u_1(A), u_1(B)\}$ is $(A,B)$-invariant, $ {\Cone}\{-u_1(B),
u_1(C)\}$ is \newline $(B,C)$-invariant, and $ {\Cone}\{u_1(A),
u_1(C)\}$ is $(A,C)$-invariant). On the other hand, the separation
condition (ii) of Corollary~\ref{co:4diagnonneg} does not hold for the
triple $(A,B,C)$, so that there is no $(A,B,C)$-invariant proper cone. 

\begin{Ex} \label{4 Needed}
A quadruple of matrices $A,B,C,D\in\R^{2\times 2}$ with distinct
positive eigenvalues such that each triple of them has a common
invariant proper cone while there is no $(A,B,C,D)$-invariant proper
cone. \end{Ex}  In accordance with Theorem~\ref{co:4}, this
quadruple consists of two pairs of commuting matrices.

As in Example~\ref{PairsNoCommon}, the eigenvalues of the matrices
can be chosen arbitrarily, as long as they are positive and distinct.
Following the eigenvector notation from the same Example, let
\[ u_1(A)=u_2(B)={\bf e}_1, \ u_2(A)=u_1(B)={\bf e}_2,\]
\[ u_1(C)=u_2(D)={\bf e}_1+{\bf e}_2, \ u_2(C)=u_1(D)={\bf e}_1-{\bf
e}_2. \] The vectors ${\bf e}_1, {\bf e}_2, {\bf e}_1+{\bf e}_2, {\bf
e}_1-{\bf e}_2$ are simultaneously dominant and non-dominant for
the quadruple $(A,B,C,D)$, and cannot be separated in the sense of
condition (iii) of Theorem~\ref{th:necond}. Consequently, there is no
$(A,B,C,D)$-invariant proper cone. On the other hand, from
Corollary~\ref{co:4diagnonneg} it follows (and can also be checked
directly, based on Theorem~\ref{th:pd}) that $\Cone\{{\bf e}_1, {\bf
e}_2\}$ is $(A,B,C)$-invariant, $\Cone\{{\bf e}_1, -{\bf e}_2\}$ is
$(A,B,D)$-invariant, $\Cone\{{\bf e}_1+{\bf e}_2, {\bf e}_1-{\bf e}_2\}$ is
$(A,C,D)$-invariant, and $\Cone\{{\bf e}_1+{\bf e}_2, {\bf e}_2-{\bf
e}_1\}$ is $(B,C,D)$-invariant.

\begin{Ex} \label{3Violated}
The set $S=\{A,B\}$ which
satisfies all the hypotheses  of Theorem ${\rm \ref{NormCom}}$ (with ${\rm (2)}$ holding) except that
$\rho(A)$, $\rho(B)$ are not semisimple eigenvalues of $A$, $B$, respectively,
and there is no $(A,B)$-invariant proper cone.
\end{Ex}
Take
$$ A=\left[\begin{array}{cc} 1&1 \\0&1 \end{array}\right] \;\;\; , \;\;\;
 B=\left[\begin{array}{cc} 1&-1 \\0&1\\ \end{array}\right]. $$
Clearly, both matrices are Vandergraft, non-diagonalizable, sharing the
dominant eigenline but having different orientation. By
Theorem~\ref{th:necond}, there is no common invariant proper cone.

\begin{Ex}
Two diagonal matrices $A_1$ and $A_2$ without a common invariant
proper cone such that all words in $A_1$ and $A_2$ are Vandergraft
matrices:
\end{Ex}
$$ A_1=\left[\begin{array}{ccc} 1 & 0 & 0 \\ 0 &
-1 & 0\\ 0 & 0& -1 \end{array}\right],\quad A_2=\left[\begin{array}{ccc} -1 & 0 & 0 \\ 0 & -1 & 0\\ 0 & 0 & 1
\end{array}\right]. $$

It is easy to check that all words in $A_1$ and $A_2$ are Vandergraft
matrices. However, condition (\ref{feb91}) of Theorem~\ref{th:diag}
fails, so that  there is no $(A_1,A_2)$-invariant
proper cone.

\begin{Ex}\label{inf} Countable set of $2\times 2$ Vandergraft matrices such that every finite number of them
has a common invariant proper cone, but the whole set does not:
\end{Ex}
Using Theorem \ref{th:pd}, it is easy to see that any  set
of the form $$ \left\{A_m=\left[\begin{array}{cc} 1 & q_m \\ 0 & r \end{array}\right], \qquad m=1,2,\ldots,
\right\}, $$
where the sequence $\{|q_m|\}_{m=1}^\infty$ tends to infinity and $0\leq r<1$ is fixed, fits the bill.

\begin{rem}From standard compactness considerations it follows that if $\mathcal A$ is an infinite family in $\pRn$
any finite subfamily of which has a common invariant proper cone,
then there exists  a non-trivial (that is, different from $\{0\}$) $\mathcal
A$-invariant closed convex pointed cone. However, it may not be
solid, and therefore is not necessarily proper. \end{rem} This is exactly
what is happening in Example~\ref{inf}.

%% The Appendices part is started with the command \appendix;
%% appendix sections are then done as normal sections
%% \appendix

%% \section{}
%% \label{}

\bibliographystyle{elsarticle-num}
%\bibliography{master}

\end{document}